\documentclass{amsart}
\usepackage{latexsym,amsxtra,amscd,ifthen,amsmath,color, multicol,hyperref}
\usepackage{amsfonts}
\usepackage{verbatim}
\usepackage{amsmath}
\usepackage{amsthm}
\usepackage{amssymb}
\usepackage{tabu}
\usepackage[notcite,notref,final]{showkeys}
 \usepackage[all,cmtip]{xy}
 %The ulem package is needed for \sout, but doesn't behave well with the arxiv software
 %\usepackage[normalem]{ulem} 

\setlength\topmargin{0in}
\setlength\headheight{0in}
\setlength\headsep{0.3in}
\setlength\textheight{8.7in}
 \setlength\textwidth{6.5in}
\setlength\oddsidemargin{0in}
\setlength\evensidemargin{0in}

%\makeatletter
%\@addtoreset{subsection}{section}
%\makeatother

\numberwithin{equation}{section}

%
%\makeatletter
%\@addtoreset{equation}{section}
%\makeatother

\theoremstyle{plain}
\newtheorem{theorem}{Theorem}[section]
\newtheorem{lemma}[theorem]{Lemma}

\newtheorem{proposition}[theorem]{Proposition}

\newtheorem{corollary}[theorem]{Corollary}

\theoremstyle{definition}
\newtheorem{definition}[theorem]{Definition}
\newtheorem{example}[theorem]{Example}
\newtheorem{hypothesis}[theorem]{Hypothesis}
\newtheorem{notation}[theorem]{Notation}

\newtheorem{remark}[theorem]{Remark}

\newtheorem{problem}[theorem]{Problem}

\newtheorem{question}[theorem]{Question}

\makeatletter              % This sequence of commands will
\let\c@equation\c@theorem  % incorporate equation numbering
                            % into theorem numbering scheme
\makeatother

\DeclareMathOperator{\gr}{gr}

\DeclareMathOperator{\op}{op}

\newcommand{\DOT}{\setlength{\unitlength}{1pt}\begin{picture}(2.5,2)
               (1,1)\put(2,2.5){\circle*{2}}\end{picture}}

\newcommand{\bu}{\DOT} 

\newcommand{\mc}{\mathcal}

\newcommand{\coh}{{\rm H}}

\newcommand{\N}{{\mathbb N}}
\newcommand{\id}{\mbox{\rm id}}

\newcommand{\ot}{\otimes}
\newcommand{\otb}{\otimes^{{\sf c}}}
\newcommand{\opc}{{\rm{op}}_{{\sf c}}}

\newcommand{\CC}{\mathbb{C}}

\newcommand{\beq}{\begin{equation}}
\newcommand{\eeq}{\end{equation}}

\begin{document}

\title[PBW deformations of braided products]
{PBW deformations of braided products}

\author{Chelsea Walton and Sarah Witherspoon}

\address{Department of Mathematics, Temple University, Philadelphia, Pennsylvania 19122
USA}

\email{notlaw@temple.edu}

\address{Department of Mathematics, Texas A\&M University, College Station, Texas 77843, USA}

\email{sjw@math.tamu.edu}

\bibliographystyle{abbrv}       % Set the bibliography style to AMS

\begin{abstract}
We present new examples of deformations of  smash product algebras that arise  from Hopf algebra actions on  pairs of module algebras. These examples involve module algebras that are Koszul, in which case a PBW theorem we established previously applies. Our construction generalizes several `double' constructions appearing in the literature, including Weyl algebras and some types of Cherednik algebras, and it complements the braided double construction of Bazlov and Berenstein. Many suggestions of further directions are provided at the end of the work.
\end{abstract} 

%\subjclass[2000]{16E65, 16W30, 16W50, 81R50}

%\keywords{}

\maketitle

%\tableofcontents

\setcounter{section}{-1}

%%%%%%%%%%%%%%%%%%%%%%%%%%%%%%%%%%%%%%%
%%%%%%%%%%%%%%%%%%%%%%%%%%%%%%%%%%%%%%%
%%%%%%%%%%%%%%%%%%%%%%%%%%%%%%%%%%%%%%%

\section{Introduction}

Deformations and representations of `doubled' algebraic structures have been of great interest, especially in the last 15 years. Such deformations include Weyl algebras, rational Cherednik algebras (RCAs), double affine Hecke algebras (DAHAs) and generalizations. Here, we are interested in algebras $\mathcal D$ with a  {\it doubled structure} in the sense that $\mathcal D$ is isomorphic as a vector space to $A\ot H\ot B$ where 
\begin{enumerate}
\item[(i)] $H$ is a Hopf algebra that is a subalgebra of $\mathcal D$, 
\item[(ii)] $H$ acts on algebras $A, B$, and
\item[(iii)] $A$ and $B$ are compatible in some fashion (e.g. there is a {\it pairing} between $A$ and $B$).
\end{enumerate}
In this case, $A \otimes H \otimes B$ is referred to as a {\it triangular decomposition} or a {\it Poincar\'{e}-Birkhoff-Witt} ({\it PBW}) {\it decomposition} of $\mathcal D$. Indeed, all of this is modeled on the decomposition of the universal enveloping algebra  $U(\mathfrak{g})$ of a finite dimensional semisimple Lie algebra $\mathfrak g$ as a tensor product $U(\mathfrak{n}^-) \otimes U(\mathfrak{h}) \otimes U(\mathfrak{n}^+)$ via the classical PBW theorem, where $\mathfrak{n}^- \oplus \mathfrak{h} \oplus \mathfrak{n}^+$ is the triangular decomposition of $\mathfrak{g}$. Consider Example~\ref{ex:rCa} and the table below for the examples mentioned above, and see \cite[\S2.12]{Che}, \cite{Dr, Et, EG, Rou}
for further reading.

\bigskip

{\small
\begin{center}
\begin{tabular}{|l||l|l|l|}
\hline
$\mathcal D$ & $H$ & $A$ & $B$\\
\hline
\hline
Weyl algebra & a field $k$ & $k[x_1, \dots, x_n]$ & $k[y_1, \dots, y_n]$, $y_i = \frac{\partial}{\partial x_i}$ \\
RCA = rational (degenerate) DAHA & 
$\mathbb{C}\Gamma$, $\Gamma \leq GL_n(\mathbb{C})$ cpx. ref. group &  $\mathbb{C}[x_1, \dots, x_n]$ & $\mathbb{C}[y_1, \dots, y_n]$\\
trigonometric (degenerate)  DAHA & $\mathbb{C} \Gamma $ &  $\mathbb{C}[x_1, \dots, x_n]$ & $\mathbb{C}[y_1^{\pm 1}, \dots, y_n^{\pm 1}]$\\
\hline
\end{tabular}
\end{center}
}
\medskip

\begin{center}
\textsc{Table 1.} Examples of `doubled' algebras
\end{center}
\bigskip

 The goal of this paper is to present generalizations of doubled structures that have not appeared in the literature, particularly in cases where $H$ is non-cocommutative or $A$ and $B$ are non-commutative Koszul algebras. Recall that an $\N$-graded algebra $A$ with $A_0=k$ is {\it Koszul} if its trivial module  $A/A_{>0}\cong k$ admits a linear graded free resolution; see \cite[Chapter~2]{PP}.
\smallskip
  
We achieve our aim by working  primarily in categories of $H$-modules that are braided, and by using the braided tensor product $\otb$ of $H$-module algebras considered by
Manin \cite[Chapters~11--12]{Manin:QGNG} and Baez~\cite{Baez}, to form the desired smash product algebra $(A \otb B) \# H$. Then, we apply our previous work \cite{WW} to compute PBW deformations of these smash product algebras.
This method is convenient since a braided product of two Koszul $H$-module algebras is again a Koszul $H$-module algebra (see Corollary~\ref{cor:twistkoszul}).
However, this construction can be somewhat limited for some types of algebras; for example, the smash product algebras arising in this manner may not admit non-trivial PBW deformations, or these deformations may be difficult to compute. 
We expand the construction by generalizing our method to apply to a twisted tensor product $\otimes^\tau$ of $H$-module algebras studied by {\v{C}}ap, Schichl, and Ven{\v{z}}ura \cite{CSV}. The twisted tensor product $A\ot^{\tau} B$ of two Koszul algebras $A$ and $B$ is known to be Koszul (see \cite[Corollary 4.19]{CSV} or Proposition~\ref{lem:twistKoszul} below), and if the twisting map $\tau$ is an $H$-module homomorphism, then $A\ot^{\tau} B$ is also an $H$-module algebra. For these examples, we may form the smash product algebra $(A \otimes^\tau B) \# H$ and compute its PBW deformations.
\smallskip

Definitions and preliminary results on braided products and on twisted tensor
products of $H$-module algebras are presented in Section~\ref{sec:braided}, and our main theorem from~\cite{WW} on PBW deformations of smash product algebras arising from Hopf actions on Koszul algebras  is recalled in Section~\ref{sec:WW}.
\smallskip

Before we present the wealth of new examples of PBW deformations constructed here, we compare our algebras with the {\it braided doubles} of Bazlov and Berenstein \cite{BB1,BB2}. This is summarized in Table~2 below.  
\medskip

\noindent {\bf Notation.} We work over an arbitrary field $k$. An unadorned $\otimes$ will mean $\otimes_k$, and is sometimes suppressed without mention. In some sections, we will put a restriction on the characteristic of $k$.
  
For a Hopf algebra $H$, let $S(V)$ (resp., $S_{\bf q}(V)$) denote the symmetric
(resp., ${\bf q}$-symmetric) algebra on a finite-dimensional $H$-module $V$. 
Here, ${\bf q} \in \text{End}_H(V\otimes V)$.  Let $T(V)=T_k(V)$ be the free $k$-algebra on $V$, and $( I )$ denote the ideal of $T(V)$ generated by~a set~$I \subset T(V)_{>0}$. 

\medskip

We see that our work is orthogonal to that of Bazlov and Berenstein in the sense that they do not require  $A\ot B$ itself to be an $H$-module algebra, but elements of $A$ and $B$ pairwise commute (or skew commute), while we require $A\ot B$ to be an $H$-module algebra, but allow more general relations between elements of $A$ and elements of $B$. 
The deformations of the products of $H$-module algebras in this work are thus of a somewhat different form than in \cite{BB1,BB2}, although there is significant common ground. Like Bazlov and Berenstein, we recover rational Cherednik algebras via our construction; in fact, a direction for research pertaining to  this example is prompted by \cite[Theorem~G]{BB1} (see Question~\ref{q:RCA}).
\bigskip

{\small
\begin{center}
{
\tabulinesep=0.65mm
\begin{tabu}{|c||c|c|}
%\begin{tabular}{|c||c|c|}
\hline
{\bf Conditions} &{\bf This work} (using braiding $\sf c$ or twisting $\tau$) & {\bf Bazlov-Berenstein} \cite{BB1,BB2}\\
\hline
\hline
On $H$ & 
\begin{tabular}{l}
using $\sf c$: (sub)category $H$-mod is braided \\
using $\tau$: --none-- 
\end{tabular}&
No conditions\\
 \hline
On $A$ & 
 Is a Koszul $H$-module algebra & 
 \begin{tabular}{l}
 Is denoted $T(V)/ ( I^- )$, No conditions\\
 \hspace{.1in} e.g. $S(V)$ or $\mathfrak{B}(V)$ (resp.,  or $S_{\bf q}(V)$)
 \end{tabular}\\
  \hline
On $B$ & 
\begin{tabular}{l}
 Is a Koszul $H$-module algebra\\
 \begin{tabular}{l}
using $\sf c$:  e.g. {\it braided-opposite} $A^{\text op}_c$   of  $A$ \\
  \hspace{.43in} (see Proposition~\ref{lem:HopKoszul})\\
using $\tau$:  $B$ so that $\tau$ is an $H$-mod map
\end{tabular}
\end{tabular}
&
 \begin{tabular}{l}
 Is denoted $T(V^*)/ ( I^+ )$ so that\\
\hspace{.1in} $T(V^*)/ ( I^+ )$ and $T(V)/ ( I^- )$ satisfy a\\
 \hspace{.14in}{\it non-deg.  \hspace{-.1in} Harish-Chandra pairing}\\
 e.g. $S(V^*)$ or $\mathfrak{B}(V^*)$ (resp.,  or $S_{\bf q}(V^*)$)
 \end{tabular}\\
 \hline
 \begin{tabular}{c}
On product\\
   of $A$ and $B$
   \end{tabular} & 
   \begin{tabular}{l}
  \hspace{-.08in} ``Mixed" relations are derived naturally\\
     \hspace{.19in} from braiding/twisting \\
  Is a Koszul $H$-module algebra \\
    \begin{tabular}{l}
   \hspace{.1in}- $H$-mod alg by Remarks~\ref{rk:braided},  \ref{rk:Hmod-tau}\\
  \hspace{.1in}- Koszul by  Prop~\ref{lem:twistKoszul}, Cor \ref{cor:twistkoszul}
   \end{tabular}
   \end{tabular}
    &
    \begin{tabular}{l}
    \hspace{-.15in}``Mixed" relations are prescribed: \\
    \hspace{.1in} $[f,v] = 0$ (resp., $[f,v]_{\bf q} = 0$) \\ 
   \hspace{.1in}  for $f \in V^*$ and $v \in V$\\
    \hspace{-.15in} Is not always an $H$-module algebra 
     \end{tabular}
\\
\hline
\begin{tabular}{c}
On deformations\\ 
of relations\\
 of $A$ or $B$ 
 \end{tabular}
 &
 \begin{tabular}{c}
 Apply Theorem~\ref{thm:mainWW} (\cite{WW}) to deform\\
relations by elements  of degree 0 or 1 
 \end{tabular}
 & Relations are not deformed \\
\hline
\begin{tabular}{c}
On deformations\\ 
of ``mixed" relations \\
of product $A$ and $B$ 
 \end{tabular}
 &
 \begin{tabular}{c}
 Apply Theorem~\ref{thm:mainWW} (\cite{WW}) to deform\\
relations by elements  of degree 0 or 1 
 \end{tabular}
 & 
 \begin{tabular}{l}
Deform by elements  of degree 0 only: \\
 for $k$-linear map $\beta: V^* \otimes V \to H$, get\\
 $[f,v] = \beta(f,v)$ (resp., $[f,v]_{\bf q} = \beta(f,v)$)
 \end{tabular}
 \\
\hline
\end{tabu}
%\end{tabular}
}
\end{center}
}

\bigskip

\begin{center}
\textsc{Table 2.} Comparing our  {\it braided/twisted products} to Bazlov and Berenstein's  {\it braided doubles}
\end{center}

\bigskip

Our main result is a  generalization of the `doubled' algebras  mentioned at the beginning of this section, complementing the braided doubles of Bazlov and Berenstein discussed above. By a {\it PBW deformation of degree 0} (or a {\it degree 0 PBW deformation}), we mean that  $\kappa = \kappa^C$ in Hypothesis~\ref{hypothesis} in Section~\ref{sec:WW}.

\begin{theorem} 
Given certain Hopf algebras $H$ and Koszul algebras $A,B$ as listed in Table~3, we find PBW deformations $\mathcal D$ of degree 0 of the smash product algebra $(A \otimes^* B) \#H$, where $\otimes^*$ is either
\begin{itemize}
\item  a braided product $\otb$ (in the case when a category of $H$-modules is braided), or \item a twisted tensor product $\otimes^\tau$ (in general).
 \end{itemize} 
Here, either the Hopf algebra $H$ is non-cocommutative or the Koszul algebras $A,B$ are noncommutative. 
The parameter space of \underline{all} of such PBW deformations is computed in the cases denoted by $\bigstar$ in Table~3.
\end{theorem}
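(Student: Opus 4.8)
The plan is to treat the theorem as a unified application of the machinery assembled in Sections~\ref{sec:braided} and~\ref{sec:WW}, carried out row by row over Table~3. Since the common skeleton is identical for every entry, I would establish it once. Given Koszul $H$-module algebras $A$ and $B$, I would first confirm that $R := A \otimes^* B$ is again a Koszul $H$-module algebra: it is an $H$-module algebra by Remarks~\ref{rk:braided} and~\ref{rk:Hmod-tau} (with the twisting map $\tau$ being an $H$-module homomorphism in the twisted case), and it is Koszul by Corollary~\ref{cor:twistkoszul} in the braided case and by Proposition~\ref{lem:twistKoszul} in the twisted case. This places $R$ squarely inside the hypotheses of Theorem~\ref{thm:mainWW}, so that PBW deformations of $(A \otimes^* B)\# H = R \# H$ are governed by the deformation parameters $\kappa$ satisfying the conditions recalled there, subject to the degree~$0$ restriction $\kappa = \kappa^C$ of Hypothesis~\ref{hypothesis}.

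With this setup in place, the existence half of the statement reduces, for each row of Table~3, to exhibiting at least one nonzero $\kappa = \kappa^C$ satisfying the PBW conditions of Theorem~\ref{thm:mainWW}. Concretely, I would identify the degree-two relation space $I$ of $R$ --- which combines the relations of $A$, the relations of $B$, and the ``mixed'' relations produced by the braiding ${\sf c}$ or twisting $\tau$ --- and then write down a candidate $H$-valued (degree~$0$) map $\kappa$ on $I$. Checking that this candidate yields a genuine PBW deformation amounts to verifying the finitely many linear and ``Jacobi-type'' associativity conditions of Theorem~\ref{thm:mainWW}, which, because $R$ is Koszul, are supported only on the degree-three overlaps $(R_1 \otimes I)\cap(I \otimes R_1)$. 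The $H$-invariance requirement on $\kappa$ is typically immediate from the construction, so the real labor is confined to these overlap relations.

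For the entries marked $\bigstar$, I would instead solve the system of PBW conditions in full generality rather than merely exhibiting one solution. That is, writing the most general $H$-invariant $\kappa = \kappa^C$ as an unknown $H$-valued linear map on $I$, I would impose all of the overlap/associativity conditions of Theorem~\ref{thm:mainWW} and reduce them to a system of equations whose solution set is precisely the claimed parameter space. I expect the main obstacle to lie exactly here: bookkeeping the mixed relations and their degree-three overlaps cleanly enough to guarantee that no PBW condition is overlooked, so that the computed parameter space is provably \emph{complete} rather than merely a lower bound. The braided and twisted cases must be handled in parallel throughout, since the mixed relations --- and hence the overlap conditions --- genuinely differ between them, even though the outer framework is the same.
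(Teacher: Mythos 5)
Your proposal is correct and follows essentially the same route as the paper: the paper proves this theorem by precisely your two-step scheme --- Remarks~\ref{rk:braided} and~\ref{rk:Hmod-tau}, Proposition~\ref{lem:twistKoszul}, and Corollary~\ref{cor:twistkoszul} (together with Proposition~\ref{lem:HopKoszul} for Koszulity of $B=A^\opc$ in the braided rows) to place $R = A\otimes^* B$ within the scope of Theorem~\ref{thm:mainWW}, followed by a row-by-row solution of conditions (a) and (c$'$) for a general degree~0 parameter $\kappa=\kappa^C$ on an explicit basis of $(I\otimes W)\cap (W\otimes I)$, whose completeness is certified by exactly the dimension count you anticipate (via the PBW basis of $A\otimes^* B$), carried out in Propositions~\ref{prop:third-root}, \ref{prop:qn}, \ref{prop:T(2)}, \ref{prop:kJRC2}, \ref{prop:kJtrivR}, \ref{prop:Ska}, and~\ref{prop:Skb}. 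The one item your skeleton leaves implicit, which the paper treats explicitly in Section~\ref{subsec:tau}, is verifying that the proposed $\tau$ is a well-defined twisting map satisfying the associativity constraint (done there via a quantum reduction system argument from \cite{BGV}) and an $H$-module homomorphism, before the common machinery can be invoked.
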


\medskip

{\small
\begin{center}
{
\tabulinesep=.65mm
\begin{tabu}{|l||c|c|c|c|c|}
%\begin{tabular}{|l||c|c|c|c|c|}
\hline
{\bf Section} &
 $H$ & $A$ & $B$ &  \begin{tabular}{c}
{\bf Braiding}/
\\
{\bf Twisting}
 \end{tabular} &
  \begin{tabular}{c}
    {\bf Parameter space of}\\ 
   {\bf degree 0 PBW deformations}\\
   {\bf of} $(A\otimes^* B) \# H$
   \end{tabular}
 \\
\hline
\hline
\ref{uqsl2braid}
 & 
$U_q(\mathfrak{sl}_2)$
 &
$k_q[u,v]$
  &
$\displaystyle A^\opc = k_q[u,v]$
&
braiding $\sf c$ \eqref{braiduqsl2}
&
\begin{tabular}{l}
 $k \times \mathbb{Z}^{ 3} \times \mathbb{N}^{ 4}$
\end{tabular}
\\
\hline
\ref{subsec:tau}
 & 
$U_q(\mathfrak{sl}_2)$
& 
$k_q[u,v]$
&
$ k_q[u,v]$
&
twisting $\tau$ \eqref{twistuqsl2}
 &
 \begin{tabular}{l}
 $k \times \mathbb{Z}$  
\end{tabular}
\\
\hline
\ref{sec:k[u,v]} $\bigstar$  
& 
$T(2)$
& 
$k[u,v]$ 
&
$\displaystyle A^\opc = k[u,v]$
&
\begin{tabular}{c}
braiding $\sf c$ with\\
$\text{R}$-matrix \eqref{T2Rmatrix} 
\end{tabular}
 &
 $k$
\\
\hline
 \ref{kJRC2} $\bigstar$
& 
$kC_2$
& 
$k_J[u,v]$
& 
$A^\opc \cong k_J[u,v]$
& 
\begin{tabular}{c}
braiding $\sf c$ with\\
$\text{R}$-matrix \eqref{RforC2}
\end{tabular}
 &
 \begin{tabular}{l}
 $k^{ 3}$
\end{tabular}
\\
\hline
 \ref{kJtrivR} $\bigstar$
& 
$kC_2$
& 
$k_J[u,v]$
& 
$A^\opc \cong k_J[u,v]$
& 
\begin{tabular}{c}
braiding $\sf c$ with\\
$\text{R}= 1\otimes 1$
\end{tabular}
&
\begin{tabular}{l}
 $k^{3}$
\end{tabular}
\\
\hline
 \ref{sec:Ska} $\bigstar$
& 
$kC_2$
& 
$S(a,b,c)$
& 
$A^\opc =S(b,a,c)$
& 
\begin{tabular}{c}
braiding $\sf c$ with\\
$\text{R}$-matrix \eqref{RforC2}
\end{tabular}
&
\begin{tabular}{l}
 $k^{ 6}$ ~~ if $a\neq b$\\
 $k^{ 15}$ if $a=b$
\end{tabular}
\\
\hline
 \ref{sec:Skb} $\bigstar$
& 
$kC_2$
& 
$S(a,b,c)$
& 
$A^\opc =S(b,a,c)$
& 
\begin{tabular}{c}
braiding $\sf c$ with\\
$\text{R} = 1 \otimes 1$
\end{tabular}
 &
\begin{tabular}{l}
 $k^{ 6}$ ~~ if $a\neq b$\\
 $k^{ 15}$ if $a=b$
\end{tabular}\\
\hline 
%\end{tabular}
\end{tabu}
}
\end{center}
}

\medskip

\begin{center}
\textsc{Table 3.} Main result: Parameterization of PBW deformations of braided (or twisted tensor) products\\
\end{center}

\medskip

In the first set of examples (Section~\ref{sec:U kq}), we compute some degree~0 PBW deformations  arising from the action of the quantized enveloping algebra $U_q(\mathfrak{sl}_2)$ on the quantum plane $k_q[u,v]$ when $q$ is a root of unity. In fact, the results in Section~\ref{subsec:tau} can be extended to examples involving an action of  $U_q(\mathfrak{gl}_2)$ (see Remark~\ref{rk:uqgl2}). In Section~\ref{sec:k[u,v]}, the Sweedler Hopf algebra $T(2)$ acts on the plane, and we find all PBW deformations of degree~0. Lastly, we find all degree~0 PBW deformations when $A$ is  either the Jordan plane $k_J[u,v]$ or a Sklyanin algebra $S(a,b,c)$ (Sections~\ref{sec:Jordan}, \ref{sec:Sk}), with an action of a cyclic group of order 2, for both the trivial braiding and a nontrivial braiding of $H$-mod.
\smallskip

Open questions and further directions of this study are presented in Section~\ref{directions}.

%%%%%%%%%%%%%%%%%%%%%%%%%%%%%%%%%%%%%%%
%%%%%%%%%%%%%%%%%%%%%%%%%%%%%%%%%%%%%%%
%%%%%%%%%%%%%%%%%%%%%%%%%%%%%%%%%%%%%%%

\section{Braided products}\label{sec:braided}

We will need the notion of a braided tensor product of Hopf module algebras; see, e.g., Manin \cite[Chapters~11--12]{Manin:QGNG},  Baez~\cite{Baez}, or Majid \cite{Majid}.
We include some details for completeness.
For background on braided monoidal categories, see e.g.\ Baez~\cite{Baez} or 
Bakalov and Kirillov~\cite{BK}, or Kassel \cite[Sections~13.1--13.3]{Kassel}.

Consider the following notation/hypotheses:

\begin{hypothesis}[$H$, $\mathcal{C}$]
Let $H$ be a Hopf algebra over a field $k$, with standard structure notation: $(H, m, \Delta, u ,\epsilon, S)$, and let $\mathcal C$
be a monoidal category of (left) $H$-modules.

Assume that $\mathcal C$ comes equipped with a braiding, that is, 
there are functorial isomorphisms
$$
    {\sf c}_{M,N}: M\ot N \stackrel{\sim}{\relbar\joinrel\longrightarrow}
     N\ot M
$$
for all pairs of objects $M,N$ in $\mathcal C$, satisfying the hexagon axioms.
(See, for example, \cite[Definition~XIII.1.1]{Kassel}.) 
\end{hypothesis}

\begin{example}
If $H=U_q({\mathfrak g})$, we could 
take $\mathcal C$ to be the category of locally finite-dimensional $H$-modules. 
In general, if $H$ is quasitriangular, we may take ${\sf c}_{M,N}$ to be given by the
action of an $\text{R}$-matrix. 
\end{example}

Now we define braided products and braided opposites of $H$-module algebras in $\mathcal{C}$.
Recall that an {\em $H$-module algebra} is an algebra $A$ that is an $H$-module in such
a way that $h\cdot (aa') = \sum (h_1\cdot a)(h_2\cdot a')$ and $h\cdot 1_A=\epsilon(h)1_A$
for all $h\in H$ and $a,a'\in A$.  Here, we employ Sweedler's notation: $\Delta(h) = \sum h_1 \otimes h_2$.

\begin{definition}[$A \otb B$, $A^\opc$] Let $A,B$ be two (left) $H$-module algebras that are in category $\mathcal C$. 
\begin{enumerate}
\item The {\em braided product} $A\otb B$
is $A\ot B$ as an object of $\mathcal C$,
and multiplication is defined by the composition
\begin{displaymath}
\xymatrix{
    A\ot B\ot A\ot B \ar[rr]^{1\ot {\sf c}\ot 1}&& A\ot A\ot B\ot B \ar[rr]^{m_A\ot m_B} 
        && A\ot B} , 
\end{displaymath}
where ${\sf c}={\sf c}_{A,B}$ and 
$m_A$ and $m_B$ are multiplication on $A$ and $B$, respectively. This multiplication is indeed associative; see e.g.\ \cite[Lemma 2]{Baez} or \cite[Lemma 2.1]{Majid}.
\item The {\em braided-opposite algebra} of the (left) $H$-module algebra $A$,
denoted $A^\opc$, 
is $A$ as an object of $\mathcal C$ and multiplication is defined by
\begin{displaymath}
\xymatrix{
    A\ot A \ar[r]^{{\sf c}}& A\ot A \ar[r]^{m_A} 
        & A ,} 
\end{displaymath}
where ${\sf c} = {\sf c}_{A,A}$. Associativity of multiplication is proved in \cite[Lemma 1]{Baez}. 
\end{enumerate}
\end{definition}

\begin{remark}\label{rk:braided}
The braided product $A\otb B$ is again an $H$-module algebra, as the following
commutative diagram shows. Let $h\in H$. Consider the diagram: 
\begin{displaymath} 
  \xymatrix{ 
  A\ot B\ot A\ot B\ar[d]^{h\cdot} \ar[rr]^{1\ot {\sf c}\ot 1} &&
       A\ot A\ot B\ot B \ar[d]^{h\cdot } \ar[rr]^{m_A\ot m_B} && A\ot B\ar[d]^{h\cdot} \\
   A\ot B\ot A\ot B \ar[rr]^{1\ot {\sf c}\ot 1} && A\ot A\ot B\ot B \ar[rr]^{m_A\ot m_B}
    && A\ot B }
\end{displaymath}
The left square commutes because $\sf c$ is an $H$-module homomorphism. 
The right square commutes because multiplication in $A$ and in $B$ are $H$-module 
homomorphisms. 

The braided-opposite algebra $A^\opc$ is again an $H$-module algebra, via the 
original action of $H$ on $A$, since $\sf c$ is an $H$-module homomorphism. 
\end{remark}

Recall that if $A$ is an $H$-module algebra, we may form the 
{\em smash product algebra} $ A \# H$, that is
$A\ot H$ as a vector space, with multiplication $(a\ot h)(a'\ot h')
=\sum a (h_1\cdot a')\ot h_2 h'$ for all $a,a'\in A$ and $h,h'\in H$. 
We will write $a\# h$ or more simply $ah$ for the element $a\ot h$
in $A\# H$ when no confusion will arise. 
We will be interested in $H$-module algebras of the form $A\otb B$,
often in the case where $B = A^\opc$, and deformations of
the resulting smash product algebra $(A\otb B)\# H$. 

\subsection{Twisted tensor products and Koszulity}

When considering `doubles' of $H$-module algebras, say of $A$ and $A^\opc$, one advantage of using a braided product  is that $A\otb A^\opc$ is automatically an $H$-module algebra, as we saw in Remark~\ref{rk:braided}. However, a disadvantage is that the supply of braidings may be limited, or that deformations of smash product algebras may be difficult to compute. One way to remedy this is to consider a more general product of two ($H$-module) algebras $A$ and $B$: the twisted tensor product.

\begin{definition}[$A \otimes^\tau B$] \cite{CSV}
Let $A$ and $B$ be algebras over $k$. A {\it twisted tensor product} $A\otimes^\tau B$ of $A$ and $B$ is the $k$-vector space $A\otimes B$, with multiplication $m_\tau$ defined as follows. Let $$\tau: B \ot A \to A \ot B$$ be a $k$-linear mapping for which $\tau(b\ot 1) = 1 \ot b$ and $\tau(1 \ot a) = a \ot 1$ for all $b\in B$
and $a\in A$. Take 
$$m_\tau:= (m_A \ot m_B) \circ (id_A \ot \tau \ot id_B),$$
and consider the associativity constraint
\begin{equation}\label{eqn:assoc}
\tau \circ (m_B \ot m_A) = m_\tau \circ (\tau \otimes \tau) \circ (id_B \ot \tau \ot id_A)
\end{equation}
as maps from $B\ot B\ot A\ot A$ to $A\ot B$. 
If $\tau$ satisfies this constraint~(\ref{eqn:assoc}), we call $\tau$ a {\em twisting map}, and in this case, $A\ot^{\tau}B$ is an associative algebra~\cite[Proposition/Definition~2.3]{CSV}. 
If $A$ and $B$ are graded algebras, we say that $\tau$ is {\em graded} if
$\tau(B_j\ot A_i)\subseteq A_i\ot B_j$ for all $i,j$. 
\end{definition}

\begin{remark} \label{rk:Hmod-tau} If $A$ and $B$ are $H$-module algebras, and if $\tau$ is an $H$-module twisting map, then the twisted tensor product $A \otimes^\tau B$ is an $H$-module algebra by the same reasoning as in Remark~\ref{rk:braided}. 
\end{remark}

Koszulity is also preserved, as the following proposition shows. 
We include a proof for completeness, although the result is known.
See, for example, \cite[Corollary 4.19]{JPS} or \cite[p.\ 90, Example 3]{PP}.

\begin{proposition} \label{lem:twistKoszul}
Assume that $A$ and $B$ are graded Koszul $H$-module algebras, and that $\tau:B\ot A\rightarrow A\ot B$ is a graded twisting map that is also an $H$-module homomorphism.
Then the twisted tensor product $A \otimes^\tau B$ is a graded Koszul $H$-module algebra.
\end{proposition}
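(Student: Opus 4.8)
The plan is to verify Koszulity of $A\otimes^\tau B$ directly, using the standard criterion that a quadratic algebra is Koszul iff its Koszul complex is a resolution of the trivial module, together with the hypothesis that $A$ and $B$ are individually Koszul. Since $A$ and $B$ are each generated in degree $1$ with quadratic relations (being Koszul), the twisted tensor product $A\otimes^\tau B$ is again quadratic: its degree-$1$ part is $A_1\oplus B_1$, and its defining relations are the relations of $A$, the relations of $B$, and the ``mixed'' relations encoded by the graded twisting map $\tau$, which by gradedness sends $B_1\otimes A_1$ into $A_1\otimes B_1$. First I would record this explicit quadratic presentation, so that the quadratic dual $(A\otimes^\tau B)^!$ can be identified.

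The key structural fact I would exploit is that, for graded twisted tensor products of connected graded algebras, one has a vector-space decomposition of each graded piece of the twisted product as a direct sum over the pieces of $A$ and $B$, namely $(A\otimes^\tau B)_n \cong \bigoplus_{i+j=n} A_i\otimes B_j$. Consequently the Hilbert series multiply: $H_{A\otimes^\tau B}(t)=H_A(t)\,H_B(t)$. The standard approach to Koszulity here is to produce an explicit free resolution of the trivial module $k$ over $A\otimes^\tau B$ by ``twisting together'' the minimal (linear) resolutions of $k$ over $A$ and over $B$. Concretely, I would take the tensor product of the two Koszul complexes and equip it with a twisted differential built from $\tau$, so that the total complex is a complex of free $A\otimes^\tau B$-modules. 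Because each constituent resolution is linear (generated in the appropriate single degree at each homological step, as $A$ and $B$ are Koszul), the total complex is again linear, which is exactly the Koszul condition. The gradedness of $\tau$ guarantees the differentials respect the grading so that linearity is preserved.

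The main obstacle I expect is the homological bookkeeping: showing that the twisted total complex is genuinely exact (i.e.\ a resolution) rather than merely a complex. I would handle this by a filtration/spectral-sequence argument, filtering by the $B$-homological degree (or equivalently by the $\tau$-induced increasing filtration on the twisted product) so that the associated graded complex is the \emph{untwisted} tensor product of the two Koszul complexes; exactness of the associated graded complex follows from the K\"unneth theorem together with the individual acyclicity of the $A$- and $B$-Koszul complexes, and then a standard filtration-convergence argument lifts exactness back to the twisted complex. Alternatively, since the result is only cited for completeness and follows from \cite{CSV}, I could instead invoke \cite[Corollary 4.19]{CSV} to conclude Koszulity of $A\otimes^\tau B$ as an algebra, and then observe separately that the $H$-module algebra structure is already furnished by Remark~\ref{rk:Hmod-tau}, so that no extra work is needed for the equivariance.

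Finally, once Koszulity of the algebra is established by either route, I would note that the $H$-action plays no role in the acyclicity argument: the conclusion that $A\otimes^\tau B$ is a Koszul \emph{$H$-module} algebra then follows immediately by combining the algebra-level Koszulity with Remark~\ref{rk:Hmod-tau}, which already tells us $A\otimes^\tau B$ is an $H$-module algebra whenever $\tau$ is an $H$-module homomorphism.
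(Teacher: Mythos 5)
Your fallback route is sound: citing \cite[Corollary 4.19]{CSV} (or \cite{JPS}) for Koszulity of the twisted tensor product and combining it with Remark~\ref{rk:Hmod-tau} for the $H$-module algebra structure is exactly the ``known result'' framing the paper itself acknowledges, so the statement is safe on that route. But your primary, self-contained argument has a genuine gap at its technical heart. To make your twisted total complex a complex of free $A\otimes^\tau B$-modules on the generator spaces $K'_i(A)\otimes K'_j(B)$, where $K'_n(A)=\bigcap_{p+q=n-2}\bigl(V^{\otimes p}\otimes I\otimes V^{\otimes q}\bigr)$, the component $1\otimes d_B$ of the differential forces you to move degree-one $B$-coefficients leftward past $K'_i(A)$; this requires that the iterated twist $\tau_{n}$ send $B\otimes K'_n(A)$ into $K'_n(A)\otimes B$, i.e., that $\tau$ preserve the relation space $I$ and all its intersections, not merely the grading. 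Gradedness only gives $\tau_n(B\otimes V^{\otimes n})\subseteq V^{\otimes n}\otimes B$ and says nothing about $I\subseteq V\otimes V$. This is precisely where the paper spends most of its proof: it deduces $\tau_{n+1}(B\otimes K_n(A))\subseteq K_n(A)\otimes B$ from the associativity constraint \eqref{eqn:assoc}, using that $I=\ker\bigl(m_A|_{V\otimes V}\bigr)$ (compose with $m_A\otimes m_B$, apply \eqref{eqn:assoc}, and retrace). Your write-up attributes well-definedness to gradedness alone, which is insufficient; without this step your twisted differential does not visibly land in the claimed free submodule.

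Your exactness mechanism is also off, though in a fixable way. You propose a filtration whose associated graded complex is the \emph{untwisted} tensor product of the two Koszul complexes; but for a general graded twisting map (e.g., a braiding given by an R-matrix or $q$-scalars, as in the paper's examples) $\tau$ is homogeneous and is not a filtered perturbation of the flip, so no such filtration exists. The reason the paper needs no spectral sequence is that the twist can be placed entirely in the \emph{module structure} rather than in the differential: as a complex of vector spaces, $K_\bullet(A\otimes^\tau B)$ is literally the ordinary tensor product over $k$ of $K_\bullet(A)$ and $K_\bullet(B)$ with the standard tensor-product differential, so acyclicity and $\coh_0\cong k$ follow from the K\"unneth theorem outright, and linearity is inherited from the factors. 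The genuinely twisted content is then exactly the module-structure verification of the previous paragraph (again via \eqref{eqn:assoc}), after which the $H$-equivariance does come for free from Remark~\ref{rk:Hmod-tau}, as you correctly observe.
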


\begin{proof}
Since  $A$ and $B$ are Koszul, we may write
$A=T(V)/(I)$ and $B=T(W)/(J)$,
for vector spaces $V$ and $W$, where $I\subseteq V\ot V$ and $J\subseteq W\ot W$.
The Koszul resolution of $k$ as an $A$-module may be expressed as $K_{\bu}(A)=\oplus_{n\geq 0}K_n(A)$
where 
$$
K_0(A) =A, \quad K_1(A) = A \otimes V, \quad  \text{ and } \ 
K_n(A) = A\ot \bigcap_{i+j=n-2} (V^{\ot i}\ot I\ot V^{\ot j})  \ \text{ for $n \geq 2$.}
$$
The differentials are those induced by the canonical embedding into the
bar resolution $B_{\bu}(A)$ of $k$.
(Recall that this bar resolution  is defined by  $B_n(A)=A^{\ot (n+1)}$, with differentials
$$
   \delta_n(a_0\ot\cdots \ot a_n) = (-1)^n\epsilon(a_n)a_0\ot \cdots \ot a_{n-1} +
   \sum_{i=0}^{n-1} (-1)^i a_0\ot \cdots\ot a_ia_{i+1}\ot \cdots\ot a_n
$$
for all $a_0,\ldots, a_n\in A$.) 
Let $K_{\bu}(B)$ denote the Koszul resolution of $k$ as a $B$-module, defined similarly. 

We may use the map $\tau: B\ot A\rightarrow A\ot B$ to define a map
$\tau_n: B\ot A^{\ot n}\rightarrow A^{\ot n}\ot B$ iteratively:
Let $\tau_1=\tau$, $\tau_2 =(\id_A\ot \tau)\circ (\tau\ot \id_A)$, and so on.  
We claim that $\tau_{n+1}$ sends $B\ot K_n(A)$
to $K_n(A)\ot B$. The claim is true if $n=0$ or $n=1$, as $\tau$ is graded.
If $n=2$, we have $K_2(A)= A\ot I$. Since $I$ is the kernel of 
$m_A | _{V\ot V}$, we will see that the associativity constraint (\ref{eqn:assoc}) ensures that
$\tau_3$ sends $B\ot A\ot I$ to $A\ot I\ot B$ as follows. 
First note it suffices to show that 
$(\id_A\ot \id_A\ot \tau)\circ (\id_A\ot \tau\ot \id_A)$ 
takes $k\ot B\ot I$ to $k\ot I\ot B$
(as the application of $\tau\ot\id_A\ot\id_A$ to $B\ot A\ot I$, sending it to
$A\ot B\ot I$, does not affect the tensor factor $I$). 
Since $\tau(1\ot a) = a\ot 1$ for all $a\in A$, this is equivalent to the statement that
the following map takes $k\ot B\ot I$ to $I\ot k\ot B$ (identifying $k$
with $B_0$ here):
$$
 (\id_A\ot \tau\ot \id_B)(\tau\ot \id_A\ot \id_B) (\id_B\ot \id_A\ot \tau) (\id_B\ot \tau\ot \id_A)
  = (\id_A\ot \tau\ot \id_B)(\tau\ot \tau) (\id_B\ot \tau\ot\id_A) . 
$$
Compose this map with $m_A\ot m_B$. By~(\ref{eqn:assoc}) and the definition of $m_{\tau}$, we obtain 
$\tau\circ (m_B\ot m_A)$ as a map from $k\ot B\ot I$ to $A\ot B$. Since $m_A$ takes $I$ to 0,
by retracing our steps, we see that $m_A\ot m_B$ takes the image of 
$$
   (\id_A\ot \tau\ot\id_B) (\tau\ot\tau) (\id_B\ot \tau\ot\id_A)
$$
on $k\ot B\ot I$ to 0. By canonically identifying $k\ot B\ot I$ with $B\ot I$, we see that this
implies that $m_A\ot \id_B$ takes the image of $(\id_A\ot\tau) (\tau\ot \id_A)$ on $B\ot I$ to 0.
Since $I$ is precisely the kernel of $m_A$ on $V\ot V$, and since $\tau$ is graded, this implies
that the image of $(\id_A\ot \tau) (\tau\ot \id_A)$ on $B\ot I$ is indeed $I\ot B$. 
If $n>2$, an inductive argument shows that $\tau_{n+1}$ sends $B\ot K_n(A)$
to $K_n(A)\ot B$. 

We will take a twisted tensor product of the Koszul resolutions $K_{\bu}(A)$ and $K_{\bu}(B)$ to
form a linear graded free resolution of $k$ as $A\ot^{\tau} B$-module, thus proving that
$A\ot^{\tau}B$ is Koszul. Let
$$
   K_{\bu}(A\ot ^{\tau} B ) = \bigoplus_{n\geq 0} K_n(A\ot ^{\tau}B) ,
$$
where $K_n(A\ot ^{\tau}B) = \oplus_{i+j=n} (K_i(A)\ot K_j(B))$, that is,
as a complex of vector spaces, we take $K_{\bu}(A\ot^{\tau}B)$ to be the tensor product, over $k$,
of $K_{\bu}(A)$ and $K_{\bu}(B)$. By the K\"unneth Theorem, since the tensor product is taken over
the field $k$, the tensor product of these two complexes is acyclic with 
$\coh_0(K_{\bu}(A)\ot K_{\bu}(B)) \cong \coh_0(K_{\bu}(A))\ot \coh_0(K_{\bu}(B)) \cong k\ot k \cong k$.
By the definition of the differential on a tensor product, it will be linear since the
differentials on $K_{\bu}(A)$ and $K_{\bu}(B)$ are linear. 
We must put the structure of an $A\ot^{\tau}B$-module on each $K_n(A\ot^{\tau}B)$ in such a
way that the differentials are module homomorphisms. 
One may simply apply the twist $\tau$ iteratively to all tensor factors below to obtain an
induced map
$$
  (A\ot ^{\tau}B) \ot K_{\bu}(A)\ot K_{\bu}(B) \stackrel{\tau_{\bu}}{\longrightarrow}
   (A\ot K_{\bu}(A))\ot (B\ot K_{\bu}(B))  \stackrel{\rho_{K_{\bu}(A)}\ot \rho_{K_{\bu}(B)}}
  {\relbar\joinrel\relbar\joinrel\relbar\joinrel\relbar\joinrel\relbar\joinrel 
   \relbar\joinrel\relbar\joinrel\relbar\joinrel\longrightarrow} K_{\bu}(A)\ot K_{\bu}(B). 
$$ 
Here, $\rho_{K_{\bu}(A)}$ is the $A$-module structure map on $K_n(A)$, for all $n \geq 0$, and
similarly for $B$. 
By the associativity constraint~(\ref{eqn:assoc}), this gives each $K_n(A\ot^{\tau} B)$ the structure
of an $A\ot^{\tau} B$-module, and the differentials are module homomorphisms. 
\end{proof} 

\subsection{On Koszulity of braided-opposite algebras and of braided products}

Now we return to the setting of braided monoidal categories of $H$-modules. 
The following result is a consequence of Proposition~\ref{lem:twistKoszul} above.

\begin{corollary} \label{cor:twistkoszul}Take a Hopf algebra $H$ and a braided monoidal category
$\mathcal{C}$ of $H$-modules, say with braiding $\sf c$. If $A$ and $B$ are graded Koszul $H$-module algebras, then the braided product $A \otb B$ is also a graded Koszul $H$-module algebra in
$\mathcal{C}$. 
\end{corollary}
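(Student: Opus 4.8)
The plan is to deduce Corollary~\ref{cor:twistkoszul} directly from Proposition~\ref{lem:twistKoszul} by exhibiting the braided product $A\otb B$ as a special case of a twisted tensor product $A\otimes^\tau B$. The key observation is that the braiding ${\sf c}={\sf c}_{A,B}\colon A\ot B\to B\ot A$ is, up to the direction convention, exactly the kind of map $\tau$ required in the definition of $\otimes^\tau$. So first I would set $\tau := {\sf c}_{B,A}\colon B\ot A\to A\ot B$ (the braiding isomorphism in the appropriate variance) and verify that this $\tau$ satisfies the three requirements of a graded $H$-module twisting map.

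The verification breaks into three checks. First, the unit conditions $\tau(b\ot 1)=1\ot b$ and $\tau(1\ot a)=a\ot 1$: these follow from naturality (functoriality) of the braiding applied to the unit morphisms $u_A\colon k\to A$ and $u_B\colon k\to B$, together with the fact that ${\sf c}_{M,k}$ and ${\sf c}_{k,N}$ are identities in any braided category where the unit object is $k$. Second, gradedness, i.e.\ $\tau(B_j\ot A_i)\subseteq A_i\ot B_j$: since $A$ and $B$ are graded $H$-module algebras and ${\sf c}$ is a morphism in $\mathcal C$ respecting the grading, the braiding preserves homogeneous degrees in each tensor factor. Third, and most importantly, the associativity constraint~\eqref{eqn:assoc}: this is where the hexagon axioms enter. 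The compatibility of ${\sf c}$ with the multiplications $m_A$ and $m_B$ is precisely encoded by the two hexagon identities, which say that braiding past a tensor product factors as braiding past each piece. I would translate the hexagon diagrams into the statement that ${\sf c}$ intertwines $m_A\ot m_B$ and $m_B\ot m_A$ in exactly the form required by~\eqref{eqn:assoc}; this is the same computation that underlies the associativity of $\otb$ already cited from Baez and Majid, so I would either invoke that directly or unwind it one step further.

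Once $\tau={\sf c}_{B,A}$ is confirmed to be a graded twisting map that is an $H$-module homomorphism (the latter holds because every braiding morphism in $\mathcal C$ is by hypothesis an $H$-module map), it remains only to identify the resulting algebra $A\otimes^\tau B$ with the braided product $A\otb B$. Comparing the two multiplication formulas, the braided product uses $(m_A\ot m_B)\circ(1\ot{\sf c}\ot 1)$ while the twisted product uses $(m_A\ot m_B)\circ(\id_A\ot\tau\ot\id_B)$; with $\tau={\sf c}_{B,A}$ placed in the middle slot these coincide, so $A\otb B = A\otimes^\tau B$ as $H$-module algebras. Proposition~\ref{lem:twistKoszul} then immediately gives that $A\otb B$ is a graded Koszul $H$-module algebra, and membership in $\mathcal C$ is automatic since $A\ot B$ is an object of the monoidal category $\mathcal C$.

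I expect the main obstacle to be the associativity/hexagon step: getting the variance of the braiding correct (${\sf c}_{A,B}$ versus ${\sf c}_{B,A}$, and whether one needs the braiding or its inverse) so that the hexagon axioms produce precisely the left-to-right form of~\eqref{eqn:assoc} rather than its mirror image. A careful bookkeeping of which tensor slots the intermediate maps act on is essential here, and a single transposition error would break the identification. The unit and grading conditions, by contrast, are routine consequences of functoriality.
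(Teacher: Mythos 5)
Your proposal is correct and takes essentially the same route as the paper, whose entire proof is the single line ``take $\tau = {\sf c}_{B,A}$ and apply Proposition~\ref{lem:twistKoszul}.'' The checks you spell out---the unit conditions via naturality of the braiding with the unit object, gradedness via naturality applied to the homogeneous components, the associativity constraint~\eqref{eqn:assoc} via the hexagon axioms together with naturality with respect to $m_A$ and $m_B$, and the identification $A\otb B = A\otimes^\tau B$---are precisely the verifications the paper leaves implicit.
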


\begin{proof}
We can take $\tau = {\sf c}_{B,A}$, and apply Proposition~\ref{lem:twistKoszul}.
\end{proof}

Recall that we will often take $B$ to be the braided-opposite algebra $A^\opc$; we consider its Koszulity below.

\begin{proposition} \label{lem:HopKoszul}Take a Hopf algebra $H$ and a braided monoidal category $\mathcal{C}$
of $H$-modules,  with braiding $\sf c$. If $A$ is a graded Koszul $H$-module algebra in $\mathcal{C}$ and $H$ preserves the grading of $A$, then the braided-opposite algebra $A^\opc$ is also a graded Koszul $H$-module algebra. 
\end{proposition}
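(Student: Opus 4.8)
The plan is to reduce the statement about the braided-opposite algebra $A^\opc$ to an application of Proposition~\ref{lem:twistKoszul}. The key observation is that $A^\opc$ is built from $A$ using the braiding ${\sf c} = {\sf c}_{A,A}$ exactly as a twisted tensor product factor would be built from a twisting map; so the goal is to exhibit $A^\opc$ as (a factor controlled by) a twisted tensor product and then invoke the earlier proposition. More precisely, I would set $\tau = {\sf c}_{A,A}: A \ot A \to A \ot A$ and verify that this braiding serves as a graded twisting map in the sense of the twisted tensor product definition.

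First I would check the three hypotheses needed to apply Proposition~\ref{lem:twistKoszul} with both tensor factors equal to $A$ and with $\tau = {\sf c}_{A,A}$. Unitality of $\tau$, namely $\tau(a \ot 1) = 1 \ot a$ and $\tau(1 \ot a) = a \ot 1$, follows from functoriality of the braiding applied to the unit map $k \to A$ (equivalently, naturality of $\sf c$ with respect to the inclusion of $A_0 = k$), which is where the hypothesis that $H$ preserves the grading of $A$ enters: it guarantees $k = A_0$ is an $H$-submodule and that $\sf c$ respects the grading so that $\tau$ is graded, i.e. $\tau(A_j \ot A_i) \subseteq A_i \ot A_j$. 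The associativity constraint~\eqref{eqn:assoc} for $\tau = {\sf c}_{A,A}$ is precisely the statement that ${\sf c}_{A,A}$ makes the braided-opposite multiplication $m_A \circ {\sf c}_{A,A}$ associative; this is exactly the content cited from \cite[Lemma 1]{Baez}, so I would quote it rather than re-deriving it, using that the hexagon axioms for $\sf c$ translate into~\eqref{eqn:assoc}. Finally, $\tau = {\sf c}_{A,A}$ is an $H$-module homomorphism because the braiding is functorial/$H$-linear by Hypothesis~[$H$, $\mathcal{C}$].

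Once these checks are in place, Proposition~\ref{lem:twistKoszul} yields that $A \ot^\tau A$ is a graded Koszul $H$-module algebra. The remaining point is to identify $A^\opc$ with this twisted tensor product (or a suitable substructure of it). Here I would argue that the braided-opposite multiplication $m_A \circ {\sf c}_{A,A}$ on $A$ agrees with the twisted multiplication restricted appropriately, so that Koszulity transfers to $A^\opc$; concretely, the generators-and-relations presentation of $A = T(V)/(I)$ passes to $A^\opc = T(V)/(I')$ where $I' = {\sf c}_{V,V}^{-1}(I)$ is the image of the relation space under the braiding, and this linear transformation of the quadratic relations does not alter Koszulity since the Koszul dual and the bar resolution structure are preserved under an invertible change of the relation space induced by the braiding.

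The main obstacle I anticipate is the bookkeeping in the last identification step: making precise the sense in which $A^\opc$ is literally a twisted tensor product of $A$ with itself, versus merely sharing its Koszul resolution, since $A^\opc$ has underlying space $A$ rather than $A \ot A$. The cleanest route is probably to bypass the twisted-tensor-product machinery and instead observe directly that the Koszul complex $K_{\bu}(A)$, with its differentials transported through the isomorphism induced by $\sf c$, furnishes a linear graded free resolution of $k$ over $A^\opc$; the argument that $\tau_{n+1}$ respects the Koszul spaces in the proof of Proposition~\ref{lem:twistKoszul} is exactly the engine that makes this transport work, and verifying that the braided-opposite relation space $I' = {\sf c}(I)$ has the same dimensions in each degree is what guarantees the resolution stays linear. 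I would therefore structure the proof to reuse the inductive claim from Proposition~\ref{lem:twistKoszul} rather than re-prove it, and spend the care on confirming that $\sf c$ being an $H$-linear graded isomorphism makes the transported resolution both $H$-equivariant and linear.
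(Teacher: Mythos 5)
There is a genuine gap, and it sits exactly where you flagged the ``main obstacle.'' Your first route, applying Proposition~\ref{lem:twistKoszul} with $\tau={\sf c}_{A,A}$, only yields Koszulity of the twisted product $A\ot^{\tau}A$, i.e., of the braided product $A\otb A$ (this is Corollary~\ref{cor:twistkoszul} with $B=A$); it says nothing about $A^\opc$, whose underlying space is $A$ rather than $A\ot A$. Your fallback is to present $A^\opc=T(V)/(I')$ with $I'={\sf c}^{-1}(I)$ (note you later write $I'={\sf c}(I)$) and then to ``transport'' $K_{\bu}(A)$ through the braiding, asserting that Koszulity is unaffected by ``an invertible change of the relation space.'' That principle is false in general: Koszulity depends on the position of $I$ inside $V\ot V$, not merely on its dimensions, and an invertible automorphism of $V\ot V$ that is not of the form $\phi\ot\phi$ can destroy it. So everything hinges on the specific braided structure, and your sketch never does the work this requires: specifying the maps identifying $A\ot K'_n$ with free $A^\opc$-modules, endowing the transported complex with a left $A^\opc$-module structure, and verifying that the transported differentials are $A^\opc$-linear. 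These verifications use naturality and the hexagon axioms (Yang--Baxter relations) in an essential way, and they are not consequences of the $\tau_{n+1}$ claim from Proposition~\ref{lem:twistKoszul}, which only shows that a tensor factor can be moved past the Koszul subspaces; likewise, matching dimensions of $I$ and $I'$ in each degree does not by itself make the transported complex a linear resolution by free $A^\opc$-modules.

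For comparison, the paper's proof supplies precisely this missing structure by generalizing Kr\"ahmer's bimodule Koszul complex \cite{Kraehmer} to the braided setting: it forms $A^e_{\sf c}=A\otb A^\opc$, makes each $K_n(A^e_{\sf c},A)=A\ot K'_n\ot A$ a left $A^e_{\sf c}$-module via the (iterated) braiding, takes the differentials from the embedding into Baez's braided bar resolution \cite{Baez} (this is where the compatibilities between ${\sf c}$ and $m_A$ are packaged), proves acyclicity of $K_{\bu}(A^e_{\sf c},A)$ by a K\"unneth argument together with a graded Nakayama-type step, and finally applies $k\ot_A-$ to obtain $K'_{\bu}(A)\ot A$, a complex of right $A$-modules, equivalently left $A^\opc$-modules via the braiding, which is the desired linear graded free resolution of $k$. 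If you want to salvage your outline, replace the ``transport'' assertion by this bimodule construction, or else prove directly, using the braid relations, that your conjugated differentials are $A^\opc$-linear; as written, the conclusion that $A^\opc$ is Koszul is not established.
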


\begin{proof}
We have already seen that $A^\opc$ is an $H$-module algebra in Remark~\ref{rk:braided}. 
Let $A^e_{\sf c}:= A\otb A^\opc$. We generalize \cite[Proposition 19 and Corollary 8]{Kraehmer} 
to the braided setting. For each nonnegative integer $n$, let 
$$
   K'_n(A^e_{\sf c},A)= \bigcap_{i+j=n} V^{\ot i}\ot I\ot V^{\ot j} \quad \mbox{ and } \quad 
  K_n(A^e_{\sf c},A):= A\ot K'_n(A^e_{\sf c}, A)\ot A 
$$
as vector spaces. Then $K_n(A^e_{\sf c},A)$ is a 
left $A^e_{\sf c}$-module, where 
$$
  (a\ot b)\cdot (a'\ot x\ot b') =
  (m_A\ot 1 \ot m_A) (1\ot {\sf c}_{A, A\ot K_n'\ot A} ) (a\ot b\ot a'\ot x\ot b'),
$$
for all $a, b, a', b' \in A$ and $x \in K'_n$. Here, we extended the braiding $\sf c$ of $A \ot A$ iteratively to $A \ot K_n(A)$, as in the proof of Proposition~\ref{lem:twistKoszul}. We define the differentials of the complex $K_{\bu}(A^e_{\sf c},A)$ to be those  induced by its embedding into the {\em braided bar resolution}  of $A$ as an
$A^e_{\sf c}$-module defined by Baez in~\cite[Section~3]{Baez}.
The braided bar resolution is almost the same as the bar resolution of $A$ as an $A$-bimodule,
that is, the terms and differentials are the same, however we replace the usual $A$-bimodule structure
of the terms with the $A^e_{\sf c}$-module structure as just described. 

Now apply $\ - \ot_A k$ to $K_{\bu}(A^e_{\sf c},A)$ to obtain
$$
  K_{\bu}(A^e_{\sf c},A)\ot_A k \cong K_{\bu}(A,k),
$$
the Koszul resolution of $k$ as an $A$-module.
By hypothesis, $K_{\bu}(A,k)$ is acyclic, i.e.\ $\coh_n(K_{\bu}(A^e_{\sf c},A)\ot_A k)
\cong \coh_n(K_{\bu}(A,k)) =0$ for all $n>0$. 
We adapt the argument in \cite[proof of Proposition~19]{Kraehmer} as follows. The K\"unneth Theorem applies
since $K_{\bu}(A^e_{\sf c},A)$ consists of free right $A$-modules and the
standard contracting homotopy is a right $A$-module homomorphism. 
As a consequence, 
$$
  \coh_n(K_{\bu}(A^e_{\sf c},A))\ot_A k \cong \coh_n(K_{\bu}(A^e_{\sf c}, A)\ot_A k)  =0
$$
for all $n > 0$, which implies  $\coh_n(K_{\bu}(A^e_{\sf c},A)) =
\coh_n(K_{\bu}(A^e_{\sf c},A))A_{>0}$.
Since $\coh_n(K_{\bu}(A^e_{\sf c},A))$ is an $A$-module with grading inherited
from that of $A$, and action by $A_{>0}$ increases degree, this implies 
$\coh_n(K_{\bu}(A^e_{\sf c},A))=0$ for $n >0$. So, $K_{\bu}(A^e_{\sf c},A)$ is acyclic. Similarly, we may now apply $k\ot_A - \ $
to $K_{\bu}(A^e_{\sf c},A)$ to obtain
$$
   k\ot _A (A\ot K'_{\bu}(A)\ot A) \cong K'_{\bu}(A)\ot A,
$$
which is a resolution of right $A$-modules, equivalently 
left $A^\opc$-modules; that it is acyclic follows
the same reasoning as above. By construction, $K'_{\bu}(A)\ot A$ is a linear free
resolution of $k$ as an $A^\opc$-module, and therefore $A^\opc$ is Koszul.
\end{proof}

\medskip

%%%%%%%%%%%%%%%%%%%%%%%%%%%%%%%%%%%%%%%
%%%%%%%%%%%%%%%%%%%%%%%%%%%%%%%%%%%%%%%
%%%%%%%%%%%%%%%%%%%%%%%%%%%%%%%%%%%%%%%

\section{Standing Hypotheses and Recollections from \cite{WW}}\label{sec:WW}

In this section, we recall terminology, hypotheses, and results from \cite{WW} that we will need.

 \begin{definition} \label{def:PBW}
Let ${\mathcal D}=\bigcup_{i \geq 0} F_i$ be a filtered algebra with $\{0\} \subseteq F_0 \subseteq F_1 \subseteq \cdots \subseteq {\mathcal D}$. We say that $\mathcal D$ is a {\it Poincar\'e-Birkhoff-Witt (PBW) deformation} of an $\N$-graded algebra $R$ if  the associated graded  algebra $\gr_F {\mathcal D} = \bigoplus_{i \geq 0} F_i/F_{i-1}$ is isomorphic to $R$ as an $\N$-graded algebra.
\end{definition}

\begin{hypothesis} [$V, R, I, \kappa, \kappa^C, \kappa^L$] 
\label{hypothesis}  First, let $V$ be a finite dimensional $H$-module. 
\begin{itemize}
\item Let $I\subseteq V\ot V$ be an $H$-submodule for which 
$R := T(V)/(I)$ is an $\mathbb{N}$-graded  Koszul algebra, $R = \bigoplus_{j \geq 0} R_j$, with $R_0 =k$. In particular, $R$ is an $H$-module algebra, so that the $H$-action preserves the grading of $R$.
\smallskip

\item Take $\kappa:I \rightarrow H \oplus (V \otimes H)$ to be a $k$-linear map,
where $\kappa$ is the sum of its {\it constant} and {\it linear} parts, $\kappa^C:I \rightarrow H$ and $\kappa^L:I \rightarrow V \otimes H$, respectively. 
\end{itemize}
\end{hypothesis}

\begin{notation} [$\mc{D}_{R,\kappa}$]   \label{def:D Akap}
Let $\mc{D}_{R,\kappa}$ be the filtered $k$-algebra given by
$$\mc{D}_{R,\kappa} = \frac{T(V) \# H}{\left(r - \kappa(r)\right)_{r\in I}}.$$
Here, we assign the elements of $H$ degree 0. 
\end{notation}
\smallskip

The main result of \cite{WW} is the following.
The action of $H$ on itself that is used in the theorem below is the left adjoint action, that is, $h\cdot \ell = \sum h_1 \ell S(h_2)$ for all $h,\ell\in H$. 

\begin{theorem}\cite[Theorem~3.1]{WW} \label{thm:mainWW} Let $H$ be a Hopf algebra with bijective antipode. 
Then, the algebra $\mc{D}_{R,\kappa}$ is a PBW deformation of $R\# H$ if and only if the following conditions hold:
\begin{enumerate}
\item $\kappa$ is $H$-invariant, i.e.\ $\kappa(h \cdot r) = h \cdot \kappa(r)$ for all $r \in I$, 
\item[(b)] ${\rm Im}(\kappa^L \otimes \id - \id \otimes \kappa^L) \subseteq I$,

\item[(c)] 
$\kappa^L \circ (\kappa^L \otimes \id - \id \otimes \kappa^L) = -(\kappa^C \otimes \id - \id \otimes \kappa^C)$, and 

\item[(d)] 
$\kappa^C \circ ( \id \otimes \kappa^L - \kappa^L \otimes \id) \equiv 0,$
\end{enumerate}
where the maps $\kappa^C \otimes \id - \id \otimes \kappa^C$ and $\kappa^L \otimes \id - \id \otimes \kappa^L$ are defined on the intersection $(I \otimes V) \cap (V \otimes I)$.

\noindent Moreover, if  $\kappa^L \equiv 0$, then $\mc{D}_{R,\kappa}$ is a PBW deformation of $R\# H$ if and only if {\em (a)} above holds,  and
\begin{enumerate}
\item[(c$'$)] 
$\kappa \otimes \id = \id \otimes \kappa,$ on $(I \otimes V) \cap (V \otimes I)$. \qed
\end{enumerate}
\end{theorem}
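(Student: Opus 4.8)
The plan is to recognize $\mc{D}_{R,\kappa}$ as a filtered deformation of the smash product $R\# H$ and to reduce the PBW question to a short list of low-degree ``Jacobi'' obstructions, using the Koszul property of $R$ in the spirit of the deformation-theoretic argument of Braverman and Gaitsgory, carried out $H$-equivariantly. Give $T(V)\# H$ the grading in which $V$ sits in degree $1$ and $H$ in degree $0$; this is compatible with the smash relations $hv = \sum (h_1\cdot v)h_2$, so $T(V)\# H$ is $\N$-graded and each generating relation $r-\kappa(r)$, $r\in I$, has top-degree part $r\in V\ot V$ while $\kappa(r)\in H\oplus(V\ot H)$ lies in filtration degree $\leq 1$. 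Filtering $\mc{D}_{R,\kappa}$ by this degree produces a canonical surjection of $\N$-graded algebras $\pi\colon R\# H\twoheadrightarrow \gr_F\mc{D}_{R,\kappa}$, whose leading-term ideal contains $(I)$. By Definition~\ref{def:PBW}, $\mc{D}_{R,\kappa}$ is a PBW deformation of $R\# H$ exactly when $\pi$ is an isomorphism, equivalently when $\pi$ is injective, equivalently when the two algebras have equal graded dimensions in each degree.

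For the necessity of (a)--(d), I would work with the overlap module $P:=(I\ot V)\cap(V\ot I)\subseteq V^{\ot 3}$, which indexes the degree-$3$ ambiguities of the rewriting system $r\mapsto\kappa(r)$. Since $I$ is an $H$-submodule, for each $h\in H$ both $r-\kappa(r)$ and $h\cdot r-\kappa(h\cdot r)$ are defining relations; conjugating the first inside the smash product via $\sum h_1(-)S(h_2)$ (which acts by the module action on $V$ and by the left adjoint action on $H$) yields $h\cdot r-h\cdot\kappa(r)$, and consistency in $\gr_F\mc{D}_{R,\kappa}$ forces $\kappa(h\cdot r)=h\cdot\kappa(r)$, i.e.\ condition~(a). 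Reducing a general element of $P$ in its two possible ways and collecting the parts of filtration degree $1$ and degree $0$ then yields conditions~(b), (c), and~(d): (b) says the degree-$1$ output of the two reductions lands back in $I$; (c) equates, through $\kappa^L$ and $\kappa^C$, the surviving degree-$1$ and degree-$0$ contributions; and (d) records the vanishing of the residual degree-$0$ obstruction.

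The main obstacle is sufficiency, namely that (a)--(d) force $\pi$ to be injective; this is where Koszulity of $R$ enters essentially. The key point, established by Braverman and Gaitsgory for deformations of Koszul algebras, is that the obstruction to flatness of such a filtered deformation a priori lives in a whole tower of cohomology groups, but for a Koszul algebra every obstruction beyond degree $3$ vanishes automatically, so that the single surviving obstruction is precisely the Jacobi-type condition on $P$ encoded by (b)--(d). Concretely, I would use the Koszul resolution $K_{\bu}(R)$ of $k$ (as in the proof of Proposition~\ref{lem:twistKoszul}) to identify $\kappa$ with the relevant degree-$2$ cochain, verify that (a)--(d) say this cochain is a genuine equivariant deformation cocycle whose cup-square obstruction vanishes, and then invoke the Koszul-driven vanishing of the higher obstructions to conclude that $\gr_F\mc{D}_{R,\kappa}$ has the same Hilbert series as $R\# H$, hence that $\pi$ is an isomorphism. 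Equivariance is maintained by running the entire argument inside the category of $H$-modules, with condition~(a) guaranteeing that every map in sight is $H$-linear; the hypothesis that $H$ has bijective antipode is what makes $R\# H$ and the adjoint action interact well with this analysis.

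Finally, for the ``Moreover'' statement with $\kappa^L\equiv 0$, I would specialize: condition~(b) becomes $\mathrm{Im}(0)\subseteq I$ and is vacuous, condition~(d) reads $\kappa^C\circ 0\equiv 0$ and is vacuous, and condition~(c) collapses to $0=-(\kappa^C\ot\id-\id\ot\kappa^C)$ on $P$. Since $\kappa=\kappa^C$ here, this is exactly the assertion $\kappa\ot\id=\id\ot\kappa$ on $(I\ot V)\cap(V\ot I)$, which is condition~(c$'$).
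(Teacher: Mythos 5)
First, a point of reference: this paper does not prove Theorem~\ref{thm:mainWW} at all --- it is quoted from \cite[Theorem~3.1]{WW} with the proof omitted (hence the \qed\ in the statement), so the only meaningful comparison is with the proof in \cite{WW}. Your outline does track that proof's general architecture: Braverman--Gaitsgory-style deformation theory, necessity extracted from the overlap space $(I\ot V)\cap (V\ot I)$ by reducing elements two ways and sorting by filtration degree, and the ``Moreover'' clause obtained by specializing $\kappa^L\equiv 0$ (your specialization is correct, and your necessity argument for (a) via conjugation $\sum h_1(\,\cdot\,)S(h_2)$ inside the smash product, using that the ideal of a PBW deformation meets filtration degree $\leq 1$ trivially, is sound).

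The genuine gap is in sufficiency, and it is precisely the point that constitutes the technical heart of \cite{WW}. Braverman--Gaitsgory's obstruction-vanishing theorem is proved for Koszul algebras whose degree-$0$ component is the ground field (or, in the Beilinson--Ginzburg--Soergel extension, a semisimple base ring). Here the graded algebra being deformed is $R\# H$, whose degree-$0$ part is $H$ --- an arbitrary Hopf algebra with bijective antipode, not assumed finite dimensional or semisimple (indeed the paper's main examples use $U_q(\mathfrak{sl}_2)$ and the non-semisimple Sweedler algebra $T(2)$). So you cannot simply ``carry out the argument $H$-equivariantly'' and invoke the Koszul-driven vanishing of higher obstructions: condition (a) making all maps $H$-linear does not by itself transport BG's theorem to a non-semisimple base, since the splittings of graded exact sequences that BG's degree-by-degree construction relies on are no longer automatic. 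Bridging exactly this is what occupies most of the proof in \cite{WW}, which exploits the specific structure of the smash product (freeness of $R\#H$ over $H$ on either side, with bijectivity of the antipode needed to pass between left and right structures) to reduce the homological analysis to the Koszul resolution of $R$ over $k$; your sketch assumes this transfer rather than proving it. A secondary slip: your closing criterion ``equal graded dimensions in each degree, hence $\pi$ is an isomorphism'' is invalid when $H$ is infinite dimensional, as a surjection between graded pieces of the same infinite dimension need not be injective --- one must argue injectivity of $\pi$ directly (e.g.\ via a PBW basis or flatness of a $k[t]$-deformation), which is how the cited proof proceeds.
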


In this paper we will focus on deformations for which $\kappa^L\equiv 0$, and thus will work
with conditions (a) and~(c$'$). We will show that there are many interesting new such examples, and we speculate about the more general setting (see Problem~\ref{kappaL}).

The lemma below will be of use in computing PBW deformations.
Let $H^H$ denote the subalgebra of $H$ consisting of elements invariant under the left adjoint action, that is, $\ell\in H^H$
if and only if $h\cdot \ell = \epsilon(h)\ell$ for all $h\in H$. A straightforward calculation
shows that $H^H = Z(H)$, the center of $H$ as an algebra. 

\begin{lemma} \label{lem:prelim}
Suppose that $\kappa^L\equiv 0$ and   $r$ is an $H$-invariant element of $I$, that is, $h \cdot r = \epsilon(h) r$ for all $h \in H$. Then, the condition that $h \cdot \kappa(r) = \kappa(h \cdot r)$ for all $h \in H$ is equivalent to $\kappa(r) \in H^H= Z(H)$.
\end{lemma}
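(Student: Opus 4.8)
The plan is to prove the equivalence by unwinding both sides of the condition $h\cdot\kappa(r) = \kappa(h\cdot r)$ using the hypotheses, and then recognizing that the statement reduces to membership in the adjoint-invariants $H^H = Z(H)$. Since $\kappa^L\equiv 0$, the map $\kappa$ takes values in $H$ alone (its linear part vanishes), so $\kappa(r)\in H$ and the action $h\cdot\kappa(r)$ appearing in the condition is simply the left adjoint action $h\cdot\ell = \sum h_1 \ell\, S(h_2)$ recalled just before Theorem~\ref{thm:mainWW}.

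\textbf{Key steps.} First I would substitute the $H$-invariance assumption on $r$ into the right-hand side: since $h\cdot r = \epsilon(h) r$, we get $\kappa(h\cdot r) = \kappa(\epsilon(h) r) = \epsilon(h)\kappa(r)$ by $k$-linearity of $\kappa$. Thus the condition $h\cdot\kappa(r) = \kappa(h\cdot r)$ for all $h\in H$ becomes exactly
\begin{equation*}
h\cdot \kappa(r) = \epsilon(h)\,\kappa(r) \quad\text{for all } h\in H.
\end{equation*}
Second, I would compare this directly with the definition of $H^H$ given in the excerpt: an element $\ell\in H$ lies in $H^H$ precisely when $h\cdot\ell = \epsilon(h)\ell$ for all $h\in H$. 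Setting $\ell = \kappa(r)$, the displayed condition is literally the statement $\kappa(r)\in H^H$. Finally, I would invoke the remark already established in the excerpt that $H^H = Z(H)$, so the condition is equivalent to $\kappa(r)\in Z(H)$, completing the chain of equivalences.

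\textbf{Main obstacle.} Honestly, there is very little obstacle here: the result is essentially a definitional unwinding, and every ingredient (the form of $\kappa$ when $\kappa^L\equiv 0$, the meaning of $H$-invariance of $r$, the adjoint action on $H$, and the identity $H^H=Z(H)$) is supplied verbatim in the surrounding text. The only point requiring a moment's care is the direction of the argument: I must make sure the equivalence holds for \emph{all} $h\in H$ simultaneously, i.e.\ that passing from the universally quantified equation to membership in $H^H$ is genuinely an ``if and only if'' and not merely an implication. This is immediate because the defining condition of $H^H$ is itself the same universally quantified equation, so no quantifier manipulation is lost. I would therefore present the proof as a short two-line computation followed by citation of the definition of $H^H$ and the stated equality $H^H = Z(H)$.
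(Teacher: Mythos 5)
Your proof is correct and matches the paper's own argument exactly: both reduce $\kappa(h\cdot r)$ to $\epsilon(h)\kappa(r)$ via linearity and the $H$-invariance of $r$, then observe that the resulting condition is precisely the definition of $H^H$, citing the previously established identity $H^H = Z(H)$. Nothing further is needed.
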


\begin{proof}
Note that $\kappa(h \cdot r) =  \kappa(\epsilon(h) r) = \epsilon(h) \kappa(r)$. So, $\kappa(r) \in H^H$ if and only if $h \cdot \kappa(r) = \kappa(h \cdot r)$ for all $h \in H$. 
 \end{proof}

We make the following standing assumption for the rest of the article.

\begin{hypothesis} \label{deg0}
Unless stated otherwise, we assume that the deformation parameter $\kappa^L$ is equal to 0, so that $\kappa = \kappa^C$, that is we only consider PBW deformations of degree 0.
\end{hypothesis}

Now we have that the {\it rational Cherednik algebras} fit into the context of Theorem~\ref{thm:mainWW} as follows:

\begin{example}\label{ex:rCa} 
Let $H =\CC \Gamma$, where $\Gamma$
is a complex reflection group whose natural representation is $U$.
Denote by $U^*$ the vector space dual to $U$ with corresponding dual action of $\Gamma$. Let $V = U\oplus U^*$. A {\it rational Cherednik algebra} (or a {\it symplectic reflection algebra}) is an algebra of the form
$$
   {\mathcal D} = \frac{T(V)\# \CC \Gamma} {(uv-vu-\kappa(uv-vu)\mid u,v\in V)}
$$
for a particular type of function $\kappa$ taking values in $\CC \Gamma$. 
Conditions (a) and (c$'$) of Theorem~\ref{thm:mainWW} are satisfied, and thus $\mathcal D$ is a PBW deformation of $S(V)\# \CC \Gamma \cong (S(U)\ot S(U^*))\# \CC \Gamma$. 
We refer the reader to \cite{Dr} or \cite[Theorem~1.3 and Corollary~4.4]{EG} for details.  
\end{example}
 
%%%%%%%%%%%%%%%%%%%%%%%%%%%%%%%%%%%%%%%
%%%%%%%%%%%%%%%%%%%%%%%%%%%%%%%%%%%%%%%
%%%%%%%%%%%%%%%%%%%%%%%%%%%%%%%%%%%%%%%

\section{Example: $H = U_q(\mathfrak{sl}_2)$ and  $A=k_q[u,v]$} \label{sec:U kq}

Let $k$ be an algebraically closed field of characteristic 0. 
Let $q$ be a primitive $n$-th root of unity, $n\geq 3$. 
Let $$A= k_q[u,v] = T(V) /(uv-qvu),$$ 
where $V$ is the vector space with basis $u,v$. 
Note that $A$  is well-known to be Koszul.

Take $H = U_q(\mathfrak{sl}_2)$ as in \cite{Jantzen}. It is generated by grouplike elements $K, K^{-1}$, a $(K,1)$-skew primitive element $E$, and a $(1,K^{-1})$-skew primitive element $F$, that is
$$
  \Delta(K^{\pm 1})=K^{\pm 1}\ot K^{\pm 1}, \ \ \
   \Delta(E)=E\ot 1 + K\ot E, \ \ \ \Delta(F)=F\ot K^{-1}+1\ot F.
$$
The relations of $U_q(\mathfrak{sl}_2)$ are 
$$EF-FE=\frac{K - K^{-1}}{q-q^{-1}}, 
\quad \quad KEK^{-1} = q^2 E, \quad \quad KFK^{-1} = q^{-2}F, \quad \quad KK^{-1} = K^{-1}K = 1.$$ 
The antipode $S$ is given by 
$S(K^{\pm 1}) = K^{\mp 1}$, $S(E)  = -K^{-1}E$, and $S(F) = -FK$.
The action of $H$ on $A$ is given by:
\[
\begin{array}{lll}
E \cdot u = 0, ~\quad  \quad &F \cdot u =v, ~\quad \quad & K^{\pm 1} 
     \cdot u = q^{\pm 1}u,\\
E \cdot v = u, ~ \quad \quad &F \cdot v =0, ~\quad \quad & K^{\pm 1} 
     \cdot v = q^{\mp 1}v.
\end{array}
\]
Since $q^n=1$, 
$K^n$ acts on $A$ as the identity and it may be checked that 
$E^{n}, F^{n}$ act as zero on $A$.

We will compute PBW deformations of $(A \otb A^\opc) \# H$ and of $(A \otimes^{\tau} A) \# H$, for a braiding $\sf c$ of a category of $H$-modules and for a twisting map $\tau$, respectively.

\subsection{PBW deformations of $(A\otb A^\opc) \# U_q({\mathfrak{sl}}_2)$} 
\label{uqsl2braid}
Let $\mathcal C$ be a category of $H$-modules on which $E$ (or $F$) 
acts locally nilpotently, and includes $A$ as an object. For example, 
 take the category of  locally finite-dimensional $H$-modules, which is a braided monoidal category. A braiding  may be described explicitly as follows. Let $M,M'$ be 
$U_q({\mathfrak{sl}}_2)$-modules, each   a direct sum of eigenspaces of $K$
with eigenvalues given by powers of $q$.
If $m\in M$ and $m'\in M'$ with 
$K\cdot m = q^a m$ and $K\cdot m' = q^b m'$, let  
\begin{equation} \label{braiduqsl2}
{\sf c}_{M,M'} (m\ot m') = 
q^{-\frac{1}{2} ab } \left(
\sum_{i=0}^{n-1} q^{-i (i-1)/2} \frac{(q^{-1}-q)^i}{[i]_q ! } F^i\ot E^i 
\right) (m'\ot m) , 
\end{equation}
where $[i]_q ! = [i]_q [i-1]_q \cdots [1]_q$ and $[j]_q = (q^j-q^{-j})/ (q-q^{-1})$. 
For further details, see \cite{Rosso} or \cite[Chapter~3]{Jantzen}; the latter may be
modified in this root of unity case.  
The reader may also wish to compare with a similar formula in 
\cite[Theorem XVII.4.2]{Kassel}.

We first give details about the structure of $A^\opc$ and $A\otb A^\opc$
as $U_q({\mathfrak{sl}}_2)$-module algebras, then we consider 
PBW deformations of $(A\otb A^\opc ) \# U_q({\mathfrak{sl}}_2)$.

\begin{proposition}\label{prop:structure}
The structure of $A^\opc$, $A\otb A^\opc$, and the action of $U_q({\mathfrak{sl}}_2)$ 
 is given as follows. 
\begin{enumerate}
\item $A^\opc$ is isomorphic to $A$ as an
$H$-module algebra: $ \  A^\opc\cong k_q[u',v'] \cong T(V')/(u'v'-qv'u'),$
where $V'$ is a copy of the $H$-module $V$ with basis $u',v'$. 
\item The relations of $A \otb A^\opc$ are
\[
\begin{array}{lll}
r_1 := uv - qvu, && r_2 := u'v' - qv'u' ,\\
r_3 := uu'-q^{-\frac{1}{2}}u'u, && r_4 := vu'-q^{\frac{1}{2}}u'v - (q^{-\frac{1}{2}} - q^{\frac{3}{2}}) v'u,\\
r_5 := uv' -  q^{\frac{1}{2}}v'u, && r_6 := vv' - q^{-\frac{1}{2}}v'v .
\end{array}
\]
\item The action of $H$ on the relations of $A\otb A^\opc$ induced by its action
on $T(V\oplus V')$ is 
\[\begin{array}{lll}
K\cdot r_1 = r_1 , & E\cdot r_1 = 0 , & F\cdot r_1 = 0 ,\\
K\cdot r_2 = r_2, & E\cdot r_2 = 0, & F\cdot r_2 = 0 ,\\
K\cdot r_3 = q^2 r_3 , & E\cdot r_3 = 0 , & F\cdot r_3 = r_5 + q^{-1} r_4,\\
K\cdot r_4 = r_4 , & E\cdot r_4 = r_3 , & F\cdot r_4 = r_6,\\
K\cdot r_5 = r_5 , & E\cdot r_5 = qr_3 , & F\cdot r_5 = qr_6 ,\\
K\cdot r_6 = q^{-2} r_6 , & E\cdot r_6 = r_5 + q^{-1} r_4, & F\cdot r_6 = 0.\\
\end{array}\]
\end{enumerate}
\end{proposition}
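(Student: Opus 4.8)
The plan is to establish all three parts by direct computation using the explicit braiding formula~\eqref{braiduqsl2} together with the definitions of the braided product and braided-opposite algebra. For part~(1), I would verify that the braided-opposite multiplication on $A$ reproduces the quantum-plane relation up to a reparametrization. Concretely, I would apply ${\sf c}_{A,A}$ to the generators and then compose with $m_A$. Since $u,v$ are $K$-eigenvectors with eigenvalues $q,q^{-1}$, the prefactor $q^{-\frac12 ab}$ contributes explicit powers of $q$, while the sum over $F^i\ot E^i$ terminates quickly because $E\cdot u=0$ and $F\cdot v=0$ kill most summands. Working out ${\sf c}(v'\ot u')$ and ${\sf c}(u'\ot v')$ and feeding them through $m_A$ should yield a relation of the form $u'v'-qv'u'=0$ (possibly after rescaling the basis), giving the claimed isomorphism $A^\opc\cong k_q[u',v']$ with the same $H$-action.

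\emph{For part~(2)}, the six relations split into two families. The relations $r_1,r_2$ are just the defining relations of the two tensor factors $A$ and $A^\opc$, so they are immediate from part~(1). The genuinely new ``mixed'' relations $r_3,r_4,r_5,r_6$ come from the braided-product multiplication: for generators $x\in\{u,v\}$ and $x'\in\{u',v'\}$, the product $x\cdot x'$ in $A\otb A^\opc$ is computed by applying $1\ot{\sf c}_{A,A^\opc}\ot 1$ and then $m_A\ot m_B$. The relation between $x x'$ and $x' x$ is therefore governed by ${\sf c}_{A,A^\opc}(x\ot x')$. Again the eigenvalue data determines the scalar prefactor $q^{-\frac12 ab}$, and the only surviving terms in the $F^i\ot E^i$ sum are $i=0$ and at most $i=1$ (since $E^2,F^2$ annihilate the degree-one part). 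I expect the $i=1$ term to be precisely what produces the extra summand $-(q^{-\frac12}-q^{\frac32})v'u$ in $r_4$, distinguishing it from the purely ``diagonal'' relations $r_3,r_5,r_6$.

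\emph{For part~(3)}, I would compute the $H$-action on each $r_i$ using the coproduct formulas for $K,E,F$ and the action on $V\oplus V'$. Since the $r_i$ live in the degree-two part $T(V\oplus V')_2$, the action of a skew-primitive element such as $E$, with $\Delta(E)=E\ot 1+K\ot E$, acts by $E\cdot(ab)=(E\cdot a)b+(K\cdot a)(E\cdot b)$; similarly for $F$ via its coproduct, and $K$ acts diagonally by multiplying the eigenvalues. The $K$-eigenvalues in the table (e.g.\ $K\cdot r_3=q^2r_3$, $K\cdot r_6=q^{-2}r_6$) follow directly from summing the weights of the constituent generators. The $E$- and $F$-actions require expanding each $r_i$ in the monomial basis and applying the skew-primitive Leibniz rule term by term; the results should be re-expressed back in terms of the $r_j$ to match the table, which is where the identities like $F\cdot r_3=r_5+q^{-1}r_4$ and $E\cdot r_6=r_5+q^{-1}r_4$ arise.

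\emph{The main obstacle} will be the bookkeeping in part~(3): after applying $E$ or $F$ to a quadratic relation one obtains a quadratic expression in the six generators that must be rewritten as a $k$-linear combination of the $r_j$, and this re-expression is only possible modulo the relations themselves. One must check that the ``leftover'' terms genuinely reorganize into the listed combinations rather than producing new relations, which is the consistency one needs for the $H$-submodule structure; the appearance of the same combination $r_5+q^{-1}r_4$ in both $F\cdot r_3$ and $E\cdot r_6$ is a useful internal check that the computation is correct. None of the individual steps is deep, but coordinating the scalar prefactors from the braiding with the Leibniz expansions of $E$ and $F$ demands care.
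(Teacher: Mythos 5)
Your plan reproduces the paper's own proof essentially verbatim: parts (1) and (2) by pushing generators through the explicit braiding \eqref{braiduqsl2}, with the sum truncating after $i=1$ because $E$ and $F$ act nilpotently on the degree-one part, and with the $i=1$ term $(q^{-1}-q)\,F\cdot u'\ot E\cdot v$ in ${\sf c}(v\ot u')$ producing exactly the extra summand $(q^{-\frac12}-q^{\frac32})v'u$ in $r_4$; and part (3) by the skew-primitive Leibniz rules $E\cdot(ab)=(E\cdot a)b+(K\cdot a)(E\cdot b)$ and $F\cdot(ab)=(F\cdot a)(K^{-1}\cdot b)+a(F\cdot b)$. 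In outline the proposal is correct and would succeed.

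One correction to your final paragraph, though, because it touches the very point of part (3): the identities such as $F\cdot r_3=r_5+q^{-1}r_4$ and $E\cdot r_6=r_5+q^{-1}r_4$ hold \emph{exactly} in the degree-two component $(V\oplus V')^{\ot 2}$ of the free algebra, not, as you write, ``only possible modulo the relations themselves.'' No reduction modulo the ideal $(r_1,\ldots,r_6)$ is available or needed there: the statement concerns the action of $H$ on $T(V\oplus V')$, and its content is precisely that the span $I$ of $r_1,\ldots,r_6$ is an $H$-submodule of $(V\oplus V')^{\ot 2}$, which is what Hypothesis~\ref{hypothesis} requires before Theorem~\ref{thm:mainWW} can be invoked in Section~\ref{uqsl2braid} (the map $\kappa$ is defined on $I$, so $H$-invariance of $\kappa$ is tested against this action on tensors). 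If the $E$- and $F$-images of the $r_i$ reorganized into combinations of the $r_j$ only after passing to the quotient, the subsequent PBW analysis would not go through as stated; one can check directly, as in the paper's sample computation of $E\cdot r_6$, that the leftover terms cancel on the nose. By contrast, in part (1) a reduction inside $A$ \emph{is} legitimately used and removes your hedge ``possibly after rescaling the basis'': the braided-opposite products $u'v'=q^{\frac12}vu$ and $v'u'=q^{\frac12}uv+(q^{-\frac12}-q^{\frac32})vu$ are elements of $A$, where $uv=qvu$ holds, and substituting this relation gives $u'v'=qv'u'$ exactly, with no rescaling.
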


\begin{proof}
(a) Let us compute the braided-opposite algebra $A^\opc$, generated by copies $u'$ and $v'$ of $u$ and $v$, respectively. Since $K\cdot u = qu$ and $K\cdot v = q^{-1}v$, using the braiding~\eqref{braiduqsl2}, we have 
\[
\begin{array}{ll}
u'u' = m_A (q^{-\frac{1}{2}} (1\ot 1 + (q^{-1}-q) F\ot E) (u\ot u)) &= q^{-\frac{1}{2}}u^2,\\
u'v' = m_A (q^{\frac{1}{2}} (1\ot 1 + (q^{-1}-q) F\ot E) (v\ot u)) &= q^{\frac{1}{2}}vu,\\
v'u' = m_A (q^{\frac{1}{2}} (1\ot 1 + (q^{-1}-q) F\ot E) (u\ot v)) &= q^{\frac{1}{2}}uv+(q^{-\frac{1}{2}}-q^{\frac{3}{2}})vu,\\
v'v' = m_A (q^{-\frac{1}{2}} (1\ot 1 + (q^{-1}-q) F\ot E) (v\ot v)) &= q^{-\frac{1}{2}} v^2.\\
\end{array}
\]
So, $u'v' = q^{\frac{1}{2}}vu =  q^{\frac{1}{2}}vu  + q^{\frac{3}{2}}(uv-qvu) =  q^{\frac{3}{2}}uv+(q^{\frac{1}{2}}-q^{\frac{5}{2}})vu = qv'u'$. Now it is clear that $A^\opc$ is generated by $u', v'$ subject to relation $r_2$.

(b) This is verified in the same manner as part (a); for example, we establish relations $r_4$ and $r_5$ as follows: 
\[
\begin{array}{l}
v u' = m_A (q^{\frac{1}{2}} (1\ot 1 + (q^{-1}-q) F\ot E) (u'\ot v)) 
= q^{\frac{1}{2}}u'v+(q^{-\frac{1}{2}} - q^{\frac{3}{2}})v'u,\\
u v' = m_A (q^{\frac{1}{2}} (1\ot 1 + (q^{-1}-q) F\ot E) (v'\ot u)) 
= q^{\frac{1}{2}}v'u.
\end{array}
\]

(c) The verification is straightforward. For instance,
\[
\begin{array}{ll}
E\cdot r_6 = E \cdot (vv'-q^{-\frac{1}{2}}v'v) &= 
(K \cdot v)(E \cdot v') + (E \cdot v)v' 
- q^{-\frac{1}{2}}((K \cdot v')(E \cdot v) + (E \cdot v')v)\\
& 
= q^{-1} vu' + uv' 
- q^{-\frac{1}{2}}(q^{-1} v'u + u'v)\\
&= r_5 + q^{-1} r_4.
\end{array}
\]

\vspace{-.23in}

\end{proof}

\begin{remark} \label{rk:RCA-dual}
The braided opposite algebra $A^\opc$ is isomorphic to  $$B:=k_{q^{-1}}[u^*,v^*]:= k\langle u^*, v^* \rangle/(u^* v^* - q^{-1} v^* u^*),$$ the algebra generated by functions $u^*,v^*$ in $V^*$ dual to $u,v$ in $V$. The action of $U_q({\mathfrak{sl}}_2)$ on $B$ is given via the antipode $S$. 
One checks that 
\[
\begin{array}{lll}
E\cdot u^* = -q^{-1}v^*, ~\quad \quad &F \cdot u^*=0, ~\quad \quad & K^{\pm 1}
   \cdot u^* = q^{\mp 1} u^*,\\
E\cdot v^*=0, ~\quad \quad &F\cdot v^* = -qu^*, ~\quad \quad & K^{\pm 1}
   \cdot v^* = q^{\pm 1} v^*.
\end{array}
\]
For instance, 
\[
\begin{array}{llll}
E \cdot u^*(u) &= K(u^*(S(E) \cdot u)) + E(u^*(S(1) \cdot u)) 
&= K(u^*(-K^{-1}E \cdot u)) + E(u^*(1 \cdot u))  &= 0\\
E \cdot u^*(v) &= K(u^*(S(E) \cdot v)) + E(u^*(S(1) \cdot v)) 
&= K(u^*(-K^{-1}E \cdot v)) + E(u^*(1 \cdot v))  &=  -q^{-1}. 
\end{array}
\]
The function $\phi: A^\opc\rightarrow B$ defined by 
$
    \phi(u') = -q^{-1}v^*$  and $ \phi(v') = u^*
$
is an $H$-module algebra isomorphism. 
Thus these examples are more similar to the rational Cherednik algebras than may at first appear; compare to Example~\ref{ex:rCa}.
\end{remark}

We next give some examples of PBW deformations of $(A \otb A^\opc) \#U_q({\mathfrak{sl}}_2)$ when $q$ is a primitive {\em third} root of unity.
For other values of $q$, the possible deformations are limited, and we will instead turn to a more general twisted tensor product in the next subsection before presenting nontrivial deformations.

\begin{proposition}\label{prop:third-root}
Let $q$  be a primitive third root of unity and $A=k_q[u,v]$. 
Consider the algebra $A\otb A^\opc$  generated by $u,v,u',v'$, subject to the relations $r_1,\ldots,r_6$
as in Proposition~\ref{prop:structure}(b). 
Then, the smash product $(A \otb A^\opc) \# U_q(\mathfrak{sl}_2)$ admits PBW deformations 
\[
    {\mathcal D}_{A\otb A^\opc, \kappa} = 
   \frac{k\langle u,v,u',v'\rangle \# U_q({\mathfrak{sl}}_2)} {(r_i-\kappa(r_i))}
\]
where 
\[
\begin{array}{ll}
\kappa(r_1) =
K^{3s_1}, E^{3t_1}, \text{ or } F^{3t_2},  
\quad&\quad 
\kappa(r_2) =K^{3s_2}, E^{3t_3}, \text{ or } F^{3t_4},  \\
 \kappa(r_3) = 0 , \quad&\quad
\kappa(r_4) = \alpha  K^{3s_0}, \\
 \kappa(r_5) =  -\alpha q^2 K^{3s_0}, \quad&\quad \kappa(r_6) = 0,
\end{array}
\]
for any $s_i \in \mathbb{Z}$, $t_i \in \mathbb{N}$, and $\alpha \in k$.
\end{proposition}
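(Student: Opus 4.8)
The plan is to verify that the proposed $\kappa = \kappa^C$ satisfies the two conditions of Theorem~\ref{thm:mainWW} that remain relevant when $\kappa^L \equiv 0$, namely condition (a) ($H$-invariance of $\kappa$) and condition (c$'$) ($\kappa \otimes \id = \id \otimes \kappa$ on $(I \otimes V) \cap (V \otimes I)$). Since Hypothesis~\ref{deg0} fixes $\kappa^L = 0$, these are the only obstructions, and conditions (b), (c), (d) are vacuous. The role of $q$ being a primitive \emph{third} root of unity is that $K^3, E^3, F^3$ all become central in $U_q(\mathfrak{sl}_2)$ at a third root of unity (indeed $E^3, F^3$ act as zero on $A$ and generate part of the center), which is exactly what is needed to land in $H^H = Z(H)$.

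First I would dispose of condition (a) using Lemma~\ref{lem:prelim}. For each relation $r_i$ that is $H$-invariant (meaning $h \cdot r_i = \epsilon(h) r_i$ for all $h$), the lemma reduces condition (a) to the requirement $\kappa(r_i) \in H^H = Z(H)$. From Proposition~\ref{prop:structure}(c), the relations $r_1$ and $r_2$ are genuinely $H$-invariant ($K$ acts trivially and $E, F$ annihilate them), so I must check that $K^{3s_i}, E^{3t_i}, F^{3t_j}$ are central at a third root of unity --- this is the standard fact that the $3$rd powers of the generators are central in $\ufn$ at a primitive $n$-th root of unity with $n=3$. For the relations $r_3, \dots, r_6$, which are \emph{not} individually $H$-invariant (e.g.\ $E \cdot r_4 = r_3$, $F \cdot r_3 = r_5 + q^{-1} r_4$), Lemma~\ref{lem:prelim} does not directly apply, so I would instead verify condition (a) directly as an identity $\kappa(h \cdot r_i) = h \cdot \kappa(r_i)$ using the explicit $H$-action from Proposition~\ref{prop:structure}(c) together with the left adjoint action of $H$ on the chosen values; the coupling $\kappa(r_5) = -\alpha q^2 K^{3s_0}$, $\kappa(r_4) = \alpha K^{3s_0}$, $\kappa(r_3) = \kappa(r_6) = 0$ is precisely engineered so that applying $E$ and $F$ to both sides matches (using that $K^{3s_0}$ is central and grouplike, its adjoint action is trivial, so $h \cdot \kappa(r_i) = \epsilon(h)\kappa(r_i)$, and one must match this against $\kappa$ applied to the image of $r_i$ under $h$).

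The main obstacle will be condition (c$'$), which requires computing the intersection $(I \otimes V) \cap (V \otimes I)$ where $I = \Span\{r_1, \dots, r_6\}$ and $V = \Span\{u, v, u', v'\}$, and then checking $\kappa \otimes \id = \id \otimes \kappa$ on it. I would first determine a basis of this intersection by the standard overlap/ambiguity analysis: write out $r_i \otimes x$ and $y \otimes r_j$ as elements of $V^{\otimes 3}$ and solve the linear system identifying which combinations lie in both $I \otimes V$ and $V \otimes I$. Each basis element of the intersection yields one equation of the form $\sum \kappa(r_i) \otimes (\text{tail}) = \sum (\text{head}) \otimes \kappa(r_j)$ in $H \otimes V$ (equivalently $V \otimes H$); since $\kappa(r_3) = \kappa(r_6) = 0$ and the nonzero values are central grouplike-or-nilpotent elements, many terms vanish and the surviving equations constrain the relationship between $\kappa(r_4)$ and $\kappa(r_5)$. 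I expect the relation $\kappa(r_5) = -\alpha q^2 K^{3s_0}$ to emerge exactly from forcing one such overlap equation to hold.

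Finally I would note that the remaining freedom --- arbitrary $s_i \in \ZZ$, $t_i \in \N$, and $\alpha \in k$ --- is consistent because the central elements $K^{3s_i}$ (for any integer power, using $K^{-1}$), $E^{3t_i}$, $F^{3t_j}$ each independently satisfy centrality and the invariance/compatibility conditions, and because $r_1, r_2$ decouple from $r_3, \dots, r_6$ in the intersection analysis (there are no overlaps mixing the two quantum-plane relations with the mixed relations in a way that further constrains these scalars). I would remark that this proposition only asserts that these are \emph{some} PBW deformations, not all of them, so I need only exhibit validity rather than exhaustiveness; the full parameter space $k \times \ZZ^3 \times \N^4$ promised in Table~3 would require the converse direction, namely showing every solution of (a) and (c$'$) has this form, which is a separate (and more laborious) computation carried out elsewhere in the paper.
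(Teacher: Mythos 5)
Your proposal is correct and follows essentially the same route as the paper's proof: the paper likewise verifies only conditions (a) and (c$'$) of Theorem~\ref{thm:mainWW}, checking (a) directly from the action table in Proposition~\ref{prop:structure}(c) (where the coupling $\kappa(r_5) = -q^2\kappa(r_4)$ is forced by $F\cdot r_3 = r_5 + q^{-1}r_4$ and $E\cdot r_6 = r_5 + q^{-1}r_4$, and $\kappa(r_1),\kappa(r_2)\in Z(H)$ is exactly what your Lemma~\ref{lem:prelim} reduction gives --- the same coupling also re-emerges from (c$'$), as you anticipate), and checking (c$'$) by a dimension count showing $(I\otimes W)\cap (W\otimes I)$ is $4$-dimensional, exhibiting an explicit basis of four elements, and verifying the resulting equations in the smash product. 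One point in your sketch needs tightening: centrality of $E^{3t}, F^{3t}, K^{3s}$ in $H$ does \emph{not} by itself make them commute with $u,v,u',v'$ inside $(A\otb A^\opc)\# H$, since in a smash product $\ell w = \sum (\ell_1\cdot w)\,\ell_2$ (a central grouplike acting nontrivially already fails to commute); the paper instead uses that at a third root of unity $E^3$ is $(K^3,1)$-skew primitive and $F^3$ is $(1,K^{-3})$-skew primitive, combined with $K^3$ acting as the identity and $E^3, F^3$ acting as zero on the generators, to conclude $\kappa(r_1)w = w\kappa(r_1)$ and $\kappa(r_2)w = w\kappa(r_2)$. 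Also, your closing aside is slightly off: the full parameter-space classification for this example is \emph{not} carried out elsewhere in the paper --- the row for Section~\ref{uqsl2braid} in Table~3 is unstarred, so only the starred cases come with an exhaustiveness claim, and this proposition (like the paper's proof of it) asserts existence only.
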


\begin{proof}
In equation (\ref{braiduqsl2}), we take $q^{\frac{1}{2}}$ to be $q^2$, the
other primitive third root of unity. 
We verify conditions (a) and (c$'$) of Theorem~\ref{thm:mainWW} where $V$
is replaced by $V\oplus V'$ with basis $u,v,u',v'$. 
Now for (a), by Proposition~\ref{prop:structure}(c), as $q^{-1}=q^2$, we need to show that 
\[\begin{array}{lll}
K\cdot \kappa(r_1) = \kappa(r_1) , & E\cdot \kappa(r_1) = 0 , & F\cdot \kappa(r_1) = 0 ,\\
K\cdot \kappa(r_2) = \kappa(r_2), & E\cdot \kappa(r_2) = 0, & F\cdot \kappa(r_2) = 0 ,\\
K\cdot \kappa(r_3) = q^2 \kappa(r_3) , & E\cdot \kappa(r_3) = 0 , & F\cdot \kappa(r_3) = \kappa(r_5) + q^2 \kappa(r_4),\\
K\cdot \kappa(r_4) = \kappa(r_4) , & E\cdot \kappa(r_4) = \kappa(r_3), & F\cdot \kappa(r_4) = \kappa(r_6),\\
K\cdot \kappa(r_5) = \kappa(r_5) , & E\cdot \kappa(r_5) = q\kappa(r_3) , & F\cdot \kappa(r_5) = q \kappa(r_6),\\
K\cdot \kappa(r_6) = q \kappa(r_6) , & E\cdot \kappa(r_6) = \kappa(r_5) + q^2 \kappa(r_4), & F\cdot \kappa(r_6) = 0.\\
\end{array}\]
By noting that $K \cdot \ell = K \ell K^{-1}$, $E \cdot \ell = -K\ell K^{-1}E + E\ell$, and $F \cdot \ell = -\ell FK + F\ell K$, for $\ell \in H$, the equations above are easy to check. For instance, say $\kappa(r_2) = E^{3t}$, then
\[
K \cdot \kappa(r_2) = K E^{3t} K^{-1} = q^{6t} E^{3t} = E^{3t} = \kappa(r_2)
 \quad  \text{and}  \quad 
F \cdot \kappa(r_3) = 0 = -\alpha q^2 K^{3s_0} +  q^2 \alpha K^{3s_0} = \kappa(r_5) + q^2 \kappa(r_4).
\]

Now we verify Theorem~\ref{thm:mainWW}(c$'$), that $\kappa \otimes \id = \id \otimes \kappa$ as maps on $(I \otimes W) \cap (W \otimes I)$, where $W=V\oplus V'$. Since $\dim W = 4$, $\dim I =6$, and $\dim (A \otb A^\opc)_3 = {3 + (4-1) \choose 4-1}= 20$ (considering the PBW basis of $A \otb A^\opc$), we get 
$$\dim ((I \otimes W) \cap (W \otimes I)) 
~=~  \dim(I \otimes W) + \dim(W \otimes I)  - \dim W^{\otimes 3} + \dim(A \otb A^\opc)_3 ~=~ 4.$$
The following equations show that the indicated elements are in $(I\otimes W)
\cap (W\otimes I)$.  They are visibly linearly independent, and therefore form 
a  basis of $(I \otimes W) \cap (W \otimes I)$ as a subspace of $T(W)$:
 \[
\begin{array}{rl}
r_1 u' - q^{{2}} r_3 v + q^{{2}}r_4 u - (q- 1)r_5 u 
&= u'r_1- q v r_3 +ur_4,\\
r_1 v' - q r_5 v +  r_6 u &= v' r_1 - q v r_5 +u r_6,\\
r_2 u + r_3 v' -q r_5 u' &= u r_2 + v' r_3 - qu'r_5,\\
r_2 v+ r_4 v' -q r_6 u' &= vr_2 + q^{{2}} v' r_4 - (q - 1) v' r_5 - q^{{2}}u'r_6.
\end{array}
\]
Theorem~\ref{thm:mainWW}(c$'$) is thus equivalent to the conditions: 
 \[
\begin{array}{c}
\kappa(r_1) u' - q^{{2}} \kappa(r_3) v + q^{{2}}\kappa(r_4) u - (q- 1)\kappa(r_5) u 
\quad=\quad u'\kappa(r_1)- q v \kappa(r_3) +u\kappa(r_4),\\
\kappa(r_1) v' - q \kappa(r_5) v +  \kappa(r_6) u \quad=\quad v' \kappa(r_1) - q v \kappa(r_5) +u \kappa(r_6),\\
\kappa(r_2) u + \kappa(r_3) v' -q \kappa(r_5) u' \quad=\quad u \kappa(r_2) + v' \kappa  (r_3) - q u'\kappa(r_5,)\\
\kappa(r_2) v+ \kappa(r_4) v' - q \kappa(r_6) u' \quad=\quad v\kappa(r_2) + q^{{2}} v' \kappa(r_4) - (q - 1) v' \kappa(r_5) - q^{{2}}u'\kappa(r_6).
\end{array}
\]
Note that $E^3$ is $(K^3, 1)$-skew primitive and $F^3$ is $(1,K^{-3})$-skew primitive since $q$ is a third root of unity. 
Thus in $(A\otb A^\opc )\# U_q({\mathfrak{sl}}_2)$, for example, 
$E^3 u' = (K^3\cdot u') E^3 + (E^3\cdot u') = u' E^3$. 
By direct computation,  we see that each of $\kappa(r_1),\kappa(r_2)$ commutes with each of $u,v,u',v'$; hence it suffices to show that 
 \[
\begin{array}{c}
 - q^{{2}} \kappa(r_3) v + q^{{2}}\kappa(r_4) u - (q- 1)\kappa(r_5) u 
\quad=\quad - q v \kappa(r_3) +u\kappa(r_4),\\
 - q \kappa(r_5) v +  \kappa(r_6) u \quad=\quad  - q v \kappa(r_5) +u \kappa(r_6),\\
 \kappa(r_3) v' -q \kappa(r_5) u' \quad=\quad  v' \kappa(r_3) - qu'\kappa(r_5),\\
 
 \kappa(r_4) v' - q \kappa(r_6) u' \quad=\quad q^{{2}} v' \kappa(r_4) - (q - 1) v' \kappa(r_5) - q^{{2}}u'\kappa(r_6).
\end{array}
\]
These equations are satisfied by the given values of
$\kappa(r_3), \kappa(r_4), \kappa(r_5), \kappa(r_6)$.
\end{proof}

%%%%%%%%%%%%%%%%%%%%%%%%%%%%%%%%%%%%%%%
 \subsection{PBW deformations of $(A \otimes^\tau A) \# U_q(\mathfrak{sl}_2)$}\label{subsec:tau}
 
Now we let $q$ be a primitive $n$-th root of unity, $n\geq 3$.
In this section we generalize the previous example in the context of
twisted tensor products.
Let $A = k_q[u,v]$ and $A'=k_q[u',v']$
with the actions of $U_q(\mathfrak{sl}_2)$ as given before. 
Define $\tau: A\ot A'\rightarrow A'\ot A$  inductively on the basis
$u^iv^j\ot (u')^r(v')^s$ by first defining
\begin{equation} \label{twistuqsl2}
\begin{array}{lll}
\tau(u\ot u')  =  q u'\ot u,&& \tau(u\ot v') =   q^2 v'\ot u, \\
  \tau(v\ot u') =  q^2 u'\ot v + (q-q^3)v' \ot u, && \tau(v\ot v')  =  qv'\ot v.
  \end{array}
\end{equation} 
In the case that $n=3$, we see that $\tau$ is the same as the braiding given in
the previous section.
For other values of $n$, one may apply \cite[Theorem~3.4.7]{BGV} as follows to show 
 that $\tau$ is well-defined and 
satisfies the associativity constraint.
First define an algebra $R$ to be generated by $u,v,u',v'$ with relations 
\[
\begin{array}{lll}
r_1 :=  uv - qvu , &&
r_2 :=  u'v' - qv'u',\\
r_3 := uu'- q u'u, &&
r_4 :=  vu' - q^2 u'v - (q - q^3) v'u,\\
r_5 := uv' - q^2 v'u, &&
r_6 := vv' - q v'v .
\end{array}
\] 
We will prove that $R$ has basis $\{(u')^{i_1}(v')^{i_2}u^{i_3} v^{i_4} \mid i_1,i_2,i_3,i_4\in \N \}$,
so that as a vector space, it may be identified with $A'\ot A$. 
As it is also an associative algebra, this proves that $\tau$ 
satisfies the associativity constraint and $R\cong A'\ot^{\tau} A$. 
We order the generators $u',v',u,v$, and correspondingly impose the degree-reverse-lex order on $\N^4$.
The relations $r_1,\ldots, r_6$ above give rise to a bounded quantum reduction system
(see \cite[Definition 4.1]{BGV}) by rewriting, using the relations, to define $f_{vu}$, $f_{v'u'}$, $f_{uu'}$, $f_{vu'}$, $f_{uv'}$, $f_{vv'}$ as follows:
\[
\begin{array}{lll}
  vu  =  f_{vu} :=  q^{-1}uv, &&
  v'u' = f_{v'u'} := q^{-1}u'v' \\
   uu' = f_{uu'} :=  qu'u, &&
  vu' = f_{vu'} :=  q^2 u'v + (q-q^3) v'u,\\
  uv' = f_{uv'} :=  q^2 v'u, &&
   vv' = f_{vv'} := qv'v.
\end{array}
\]
By \cite[Theorem 3.4.7]{BGV}, $R$ has the basis claimed if and only if 
$z f_{yx} = f_{zy} x$ for all ordered triples $(x,y,z)$ of generators. 
There are 4 generators, and thus 4 such triples. 
All triples satisfy the required condition; as one example,
\[
\begin{array}{lll}
   v f_{v'u'} = v (q^{-1}u'v')  &=  q^{-1} (q^2 u'v + (q-q^3) v'u) v' 
   &= q u' v v' + (1-q^2) v' u v' ~= q^2 u' v' v + (q^2-q^4) (v')^2 u, \\
  f_{vv'} u' = (qv'v) u' &=  qv' (q^2 u'v + (q-q^3) v'u)
   &= q^2 u'v'v +  (q^2-q^4) (v')^2 u , 
\end{array}
\]
and these two expressions are indeed equal. 
Thus we have a twisted tensor product algebra $A'\ot^{\tau} A$ that is isomorphic to
the algebra with generators $u,v,u',v'$ and relations $r_1,\ldots, r_6$ given above. 
A calculation shows that the twisting map $\tau$ is an $H$-module homomorphism.
Therefore $A'\ot^{\tau} A$ is an $H$-module algebra. 
For applying Theorem~\ref{thm:mainWW}, we will need to know 
that in $T(V\oplus V')$, 
$$
\begin{array}{lll}
K\cdot r_1 = r_1 , & E\cdot r_1 = 0 , & F\cdot r_1 = 0 ,\\
K\cdot r_2 = r_2, & E\cdot r_2 = 0, & F\cdot r_2 = 0 ,\\
K\cdot r_3 = q^2 r_3 , & E\cdot r_3 = 0 , & F\cdot r_3 = r_5 + q^{-1} r_4 ,\\
K\cdot r_4 = r_4 , & E\cdot r_4 = r_3 , & F\cdot r_4 = r_6 ,\\
K\cdot r_5 = r_5 , & E\cdot r_5 = qr_3 , & F\cdot r_5 = qr_6 ,\\
K\cdot r_6 = q^{-2} r_6 , & E\cdot r_6 = r_5 + q^{-1} r_4 , & F\cdot r_6 = 0 .
\end{array}
$$
We will also need to know  a basis of $(I\ot W)\cap (W\ot I)$, where $W=V\oplus V'$. One checks that  
\[
\begin{array}{rl}
r_1 u' - q^2 r_3 v + q^2 r_4 u - (q-q^3) r_5 u &= 
  u'r_1 - qvr_3 + ur_4,\\
r_1 v' - q r_5 v + q^3 r_6 u &=  q^3 v' r_1 - q vr_5 + u r_6,\\
q^3 r_2 u + r_3 v' - q r_5 u' &= ur_2 + q^3 v'r_3 - qu'r_5,\\
q^3 r_2 v + r_4 v' -qr_6 u' &= v r_2 + q^2 v'r_4 - (q-q^3)v' r_5-q^2 u' r_6.
\end{array}
\]
These elements form a basis of $(I\ot W)\cap (W\ot I)$. 
Now the conditions in Theorem~\ref{thm:mainWW} may be checked, as in the proof
of Proposition~\ref{prop:third-root}, to show  the following. 

\begin{proposition}\label{prop:qn} 
Let $q$ be a primitive $n$-th root of unity, $A=k_q[u,v]$, $A'=k_q[u',v']$.
The algebra $A'\ot ^{\tau} A$ is generated by $u,v,u',v'$, subject to the
relations $r_1,\ldots, r_6$ above. 
The smash product $(A'\ot^{\tau} A) \# U_q({\mathfrak{sl}}_2)$ admits PBW
deformations 
\[ {\mathcal D}_{A'\ot^{\tau} A, \kappa} = 
  \frac{k\langle u,v,u',v'\rangle \# U_q({\mathfrak{sl}}_2)}
     {(r_i-\kappa(r_i))}
\]
with 
$$
\kappa(r_4) = \alpha K^{ns}, \ \ \ \ \ \kappa(r_5) = -q^{-1} \alpha K^{ns},
$$
where $\alpha$ is an arbitrary scalar, $s$ an arbitrary integer, 
and $\kappa$ applied to each of the other relations $r_i$ is 0. 
\qed
\end{proposition}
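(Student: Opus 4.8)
The plan is to invoke Theorem~\ref{thm:mainWW} in its $\kappa^L \equiv 0$ form, so that it suffices to verify conditions (a) and (c$'$) for the map $\kappa$ with $\kappa(r_4) = \alpha K^{ns}$, $\kappa(r_5) = -q^{-1}\alpha K^{ns}$, and $\kappa(r_i) = 0$ for $i \in \{1,2,3,6\}$. This mirrors the proof of Proposition~\ref{prop:third-root}: the generating space is $W = V \oplus V'$, the ideal $I$ is spanned by $r_1,\ldots,r_6$, and both the $H$-action on these relations and an explicit basis of $(I \ot W)\cap(W \ot I)$ are already recorded in the discussion preceding the statement, so I may quote them directly.

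For condition (a), the first step is to record that $K^{ns}$ lies in $H^H = Z(H)$. Indeed $K \cdot K^{ns} = K K^{ns} K^{-1} = K^{ns}$, and since $E K^{ns} = q^{-2ns} K^{ns} E = K^{ns} E$ and $F K^{ns} = q^{2ns} K^{ns} F = K^{ns} F$ by $q^n = 1$, the adjoint formulas $E \cdot \ell = E\ell - K\ell K^{-1} E$ and $F \cdot \ell = F\ell K - \ell F K$ give $E \cdot K^{ns} = F \cdot K^{ns} = 0$. Comparing $\kappa$ against the displayed $H$-action on $r_1,\ldots,r_6$, every required identity is then immediate except the coupling coming from $F \cdot r_3 = r_5 + q^{-1} r_4$ (equivalently from $E \cdot r_6 = r_5 + q^{-1} r_4$), which reads $F \cdot \kappa(r_3) = \kappa(r_5) + q^{-1}\kappa(r_4)$. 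As $\kappa(r_3) = 0$, this forces $\kappa(r_5) = -q^{-1}\kappa(r_4)$, precisely the chosen normalization, and both couplings impose the same condition consistently.

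For condition (c$'$) I substitute $\kappa$ into the four basis elements of $(I \ot W)\cap(W \ot I)$ displayed above. Because $\kappa$ vanishes on $r_1, r_2, r_3, r_6$, each of the four identities collapses to a statement involving only $\kappa(r_4)$ and $\kappa(r_5)$. The remaining computational input is that, in the smash product, $K^{ns}$ is central on the generators: $K^{ns} x = (K^{ns}\cdot x) K^{ns} = q^{\pm ns} x K^{ns} = x K^{ns}$ for $x \in \{u,v,u',v'\}$, again using $q^n = 1$. Feeding in $\kappa(r_4) = \alpha K^{ns}$ and $\kappa(r_5) = -q^{-1}\alpha K^{ns}$, the scalar coefficients collapse; for instance the first equation reduces via $q^2 + (1-q^2) = 1$ to $\alpha K^{ns} u = \alpha u K^{ns}$, and all four identities hold.

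I expect the only real obstacle to be bookkeeping rather than conceptual difficulty. The substantive constraint is the single coupling in (a) that ties $\kappa(r_5)$ to $\kappa(r_4)$ and prevents independent deformation of the remaining relations; once $K^{ns} \in Z(H)$ and its centrality on the generators are in hand, (c$'$) is purely a coefficient check. The one point requiring genuine care is confirming that the four displayed spanning elements really do form a basis of $(I \ot W)\cap(W \ot I)$, which rests on the quantum reduction system for $A' \ot^\tau A$ verified via \cite[Theorem~3.4.7]{BGV} in the preceding paragraphs, since an incomplete basis would leave (c$'$) only partially checked.
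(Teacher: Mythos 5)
Your proposal is correct and follows exactly the paper's route: the paper proves Proposition~\ref{prop:qn} by invoking Theorem~\ref{thm:mainWW} with $\kappa^L\equiv 0$ and checking conditions (a) and (c$'$) ``as in the proof of Proposition~\ref{prop:third-root},'' using precisely the recorded $H$-action on $r_1,\ldots,r_6$ and the displayed basis of $(I\ot W)\cap (W\ot I)$. Your explicit verifications --- the centrality of $K^{ns}$ in $Z(H)=H^H$ and on the generators via $q^n=1$, the coupling $F\cdot r_3 = r_5 + q^{-1}r_4$ forcing $\kappa(r_5)=-q^{-1}\kappa(r_4)$, and the coefficient collapse $q^2+(1-q^2)=1$ in the (c$'$) equations --- simply fill in the details the paper leaves to the reader.
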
 

\begin{remark} \label{rk:uqgl2}
The examples in this section may be extended to $U_q({\mathfrak{gl}}_2)$ to
obtain PBW deformations of $(A'\ot^{\tau} A)\# U_q({\mathfrak{gl}}_2)$.
Let $q \in k$ be a root of unity of  order $n \geq 3$.
The  Hopf algebra $U_q(\mathfrak{gl}_2)$ is
 generated as an algebra by $G_1^{\pm 1}$, $G_2^{\pm 1}$, $E$, and $F$, subject to relations:
\[
\begin{array}{c}
\medskip

$$G_1E = q E G_1, \quad G_2 E = q^{-1}EG_2, \quad G_1 F = q^{-1}F G_1, \quad G_2 F = q F G_2,\\
G_1 G_2 = G_2 G_1, \quad G_1G_1^{-1} = G_1^{-1}G_1 = 1, 
\quad G_2G_2^{-1}=G_2^{-1}G_2 =1, 
\quad EF - FE = \displaystyle \frac{G_1G_2^{-1} - G_2G_1^{-1}}{q - q^{-1}}.
\end{array}
\]
The Hopf structure of $U_q(\mathfrak{gl}_2)$ and its action on $A=k_q[u,v]$ are given by:
\[
\begin{array}{lll}
\Delta(E) = E \ot 1 + G_1G_2^{-1} \ot E, \quad &\Delta(F) = F \ot G_2G_1^{-1} + 1 \ot F, \quad &\Delta(G_i^{\pm 1}) = G_i^{\pm 1} \ot G_i^{\pm 1},\\
\epsilon(E) = 0, ~&\epsilon(F) = 0 ~ &\epsilon(G_i^{\pm 1}) = 1,\\
S(E) = -G_2G_1^{-1}E, ~ &S(F) = -FG_1G_2^{-1}, ~ &S(G_i^{\pm 1}) = G_i^{\mp 1},
\end{array}
\]
for $i = 1,2$ and 
\vspace{-.1in}

\[
\begin{array}{llll}
E \cdot u = 0, ~\quad  \quad &F \cdot u =v, ~\quad \quad & G_1 \cdot u = qu, ~\quad  \quad & G_2 \cdot u = u,\\
E \cdot v = u, ~ \quad \quad &F \cdot v =0, ~\quad \quad & G_1 \cdot v = v, ~\quad  \quad & G_2 \cdot v = qv.
\end{array}
\]
Note that the twisted tensor product $A'\ot^{\tau} A$  is preserved
by the actions of $G_1$ and $G_2$, and setting $K = G_1 G_2^{-1}$ allows us to
realize $A'\ot^{\tau} A$ as a $U_q(\mathfrak{gl}_2)$-module algebra.
The rest of the development in the case $U_q(\mathfrak{sl}_2)$ works here as well,
resulting in PBW deformations 
\[
   {\mathcal D}_{A'\ot^{\tau} A, \kappa} = 
  \frac{k\langle u,v,u',v'\rangle \# U_q({\mathfrak{gl}}_2)}
    {(r_i-\kappa (r_i))}
\]
of $(A'\ot^{\tau} A)\# U_q(\mathfrak{gl}_2)$ with
$$
  \kappa (r_4) = \alpha G_1^{ns} G_2^{-ns} , \ \ \ \kappa(r_5) = -q^{-1} \alpha G_1^{ns}G_2^{-ns},
$$
and $\kappa$ applied to each of the other relations is 0. 
(Cf.\ Proposition~\ref{prop:qn}.) 
\end{remark}

%%%%%%%%%%%%%%%%%%%%%%%%%%%%%%%%%%%%%%%
%%%%%%%%%%%%%%%%%%%%%%%%%%%%%%%%%%%%%%%
%%%%%%%%%%%%%%%%%%%%%%%%%%%%%%%%%%%%%%%

\section{Example: $H = T(2)$ (Sweedler Hopf algebra) and $A=k[u,v]$} \label{sec:k[u,v]}

Take a field $k$ with  $\text{char}(k) \neq 2$. Consider the Sweedler (Hopf) algebra $T(2)$ generated by a grouplike element $g$ and a $(g,1)$-skew primitive element $x$ with relations:
$$g^2 = 1, \quad \quad x^2=0, \quad \quad gx+xg=0.$$
Here, $\epsilon(g) =1$, $\epsilon(x) = 0$, $S(g) = g$, and $S(x) = -gx$. Further, by \cite[page 296]{Ra}, $T(2)$ is quasitriangular with $\text{R}$-matrix depending on a parameter $\lambda\in k$: 
\begin{equation}\label{T2Rmatrix}
\text{R}_{\lambda} = \textstyle \frac{1}{2}(1 \otimes 1 + 1 \otimes g + g \otimes 1 - g \otimes g) +
\frac{\lambda}{2}(x \otimes x + x \otimes gx + gx \otimes gx - gx \otimes x) .
\end{equation}
The corresponding braiding ${\sf c}={\sf c}_\lambda$ is given by ${\sf c}_\lambda = \text{R}_\lambda \circ\sigma$
where $\sigma$ is the flip map: $\sigma(m\ot m')=m'\ot m$ for all $m\in M$, $m'\in M'$,
where $M,M'$ are two $T(2)$-modules. 
There is a $T(2)$-action on $A= k[u,v]$ given by
$$g \cdot u = u, \quad g \cdot v = -v, \quad x \cdot u =0, \quad x \cdot v =u,$$
under which $A$ is a $T(2)$-module algebra. 
Let $V$ be the vector space with basis $u,v$ and 
write $A=T(V)/(r)$, with $r=uv-vu$. The action of $H$ on $T(V)$ yields 
$$g \cdot r = -r \quad \quad \text{and} \quad \quad x \cdot r = 0.$$

Let us compute the $T(2)$-opposite algebra $A^\opc$, generated by $u'$ and $v'$. Here,
\[
\begin{array}{ll}
u'u' = m_A  \text{R}_{\lambda}(u \otimes u)  = u^2,  &\quad \quad u'v' = m_A  \text{R}_{\lambda}(v \otimes u)  = vu,\\
v'u' =  m_A  \text{R}_{\lambda}(u \otimes v) = uv, &\quad \quad v'v' = m_A  \text{R}_{\lambda}(v \otimes v)  = -v^2+\lambda u^2.
\end{array}
\]
So, the only relation of $A^\opc$ is $u'v' = v'u'$, and  $A^\opc = k[u',v']$. 
(Note that there is an isomorphism $A^\opc \cong k[u^*,v^*]$, where $u^*$, $v^*$ are dual basis vectors of $V^*$, with  
$\ g\cdot u^*=u^*, \ g\cdot v^*=-v^*, \ x\cdot u^* = -v^*, \ x\cdot v^*=0$. 
For example,
\[ 
\begin{array}{l}
x \cdot u^*(u) = g(u^*(S(x)\cdot u))+x(u^*(S(1)\cdot u)) = g(u^*(0))+x(u^*(u)) =0\\
x \cdot u^*(v) = g(u^*(S(x)\cdot v))+x(u^*(S(1)\cdot v)) = g(u^*(-u))+x(u^*(v)) =-1,
\end{array}
\]
which implies $x \cdot u^* = -v^*$.)

The braided product  $A \otb  A^\opc$
is generated by $u,v,u',v'$, with relations 
\[
\begin{array}{lll}
r_1 := uv-vu, &&
r_2 := u'v'-v'u',\\
r_3 := uv'-v'u, &&
r_4 := vv'+v'v-\lambda uu',\\
r_5 := uu'-u'u,&&
r_6 := vu'-u'v.
\end{array}
\]
To obtain $r_4$, for instance, consider the following calculation:
\[
\begin{array}{ll}
(m_A \ot m_{A^\opc})(1 \ot {\sf c} \ot 1)(1 \ot v' \ot v \ot 1)
&= \textstyle \frac{1}{2}(vv'-vv'-vv'-vv') + \frac{\lambda}{2}(uu'+uu'+uu'-uu')\\
&= -vv' + \lambda uu'.
\end{array}
\]
Let $V'$ be the vector space with basis $u',v'$.
The $T(2)$-action on the relations considered as elements of $(V\oplus V')\ot
(V\oplus V')$ is given by 
\[
\begin{array}{lllllllllll}
g \cdot r_1 = -r_1, && g \cdot r_2 = -r_2, && g \cdot r_3 = -r_3, && 
g \cdot r_4 = r_4, && g \cdot r_5 = r_5,  && g \cdot r_6 = -r_6,\\
x \cdot r_1 = 0 ,    && x \cdot r_2 = 0,    && x \cdot r_3 = r_5, && 
x \cdot r_4 = r_3-r_6,  && x \cdot r_5 = 0,  && x \cdot r_6 = r_5.\\
\end{array}
\]

Now using Theorem \ref{thm:mainWW}, we may find PBW deformations $\mathcal{D}_{A \otb A^\opc, \kappa}$ of $(A \otb A^\opc) \# T(2)$. Let $A \otb A^\opc$  be presented as $T( V\oplus V' )/ (I)$, where $I$ is the subspace of $(V\oplus V')\ot (V\oplus V')$ spanned by  $r_1,r_2,r_3,r_4,r_5,r_6$.

\begin{proposition} \label{prop:T(2)}
Let $A=k[u,v]$. The algebra $A\otb A^\opc$ is generated by $u,v,u',v'$ subject
to the relations $r_1,\ldots, r_6$ above. 
There is a  1-parameter deformation of $(A \otb A^\opc) \# T(2)$, 
$${\mathcal {D}}_{A\otb A^\opc, \kappa} =
  \frac{k\langle u,v,u',v'\rangle \# T(2) } {(r_i-\kappa(r_i))}
$$
given by
 $\kappa(r_4) = \alpha\in k$ ~~and $\kappa(r_i) = 0$
for $i\neq 4$. 
Moreover, all PBW deformations for which $\kappa = \kappa^C$ (as assumed in Hypothesis~\ref{deg0}) are of this form. 
\end{proposition}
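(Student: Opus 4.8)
The plan is to apply Theorem~\ref{thm:mainWW}. Since we work under Hypothesis~\ref{deg0}, we have $\kappa = \kappa^C$ with $\kappa^L\equiv 0$, so it suffices to verify conditions (a) and (c$'$) and, for the ``moreover'' clause, to determine \emph{exactly} which $\kappa\colon I\to T(2)$ satisfy both. Writing the value of $\kappa$ on each generating relation in the $k$-basis $\{1,g,x,gx\}$ of $T(2)$, say $\kappa(r_i)=a_i+b_ig+c_ix+d_igx$, turns the problem into a finite linear-algebra computation in the twenty-four scalars $a_i,b_i,c_i,d_i$. A preliminary fact I would record first is the left adjoint action of $g$ and $x$ on this basis, together with the resulting description of $H^H=Z(T(2))$; since $\mathrm{char}(k)\neq 2$, one finds $Z(T(2))=k$.

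I would treat condition (a) next. By the remark preceding the theorem it suffices to check $H$-invariance on the algebra generators $g$ and $x$. Combining the displayed $H$-action on $r_1,\dots,r_6$ with the adjoint action computed above yields a linear system in the $a_i,b_i,c_i,d_i$. I expect the grouplike ($g$-invariance) constraints to force $\kappa(r_1),\kappa(r_2),\kappa(r_3),\kappa(r_6)$ into $\mathrm{span}_k\{x,gx\}$ and $\kappa(r_4)$ into $\mathrm{span}_k\{1,g\}$, while the skew-primitive ($x$-invariance) constraints — in particular applying $x\cdot r_3=r_5$ and the key relation $x\cdot r_4=r_3-r_6$ — pin down $\kappa(r_5)=0$ and tie the $x,gx$-parts of $\kappa(r_3),\kappa(r_6)$ to one another and to the grouplike part of $\kappa(r_4)$. (Here Lemma~\ref{lem:prelim} gives a quick sanity check: $r_5$ is the only $H$-invariant relation among $r_1,\dots,r_6$, so condition (a) alone already forces $\kappa(r_5)\in Z(T(2))=k$.)

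Then I would impose condition (c$'$), namely $\kappa\otimes\id=\id\otimes\kappa$ on $(I\otimes W)\cap(W\otimes I)$ with $W=V\oplus V'$. Exactly as in the count in the proof of Proposition~\ref{prop:third-root}, Koszulity of $A\otb A^\opc$ (Corollary~\ref{cor:twistkoszul}) together with the Hilbert series of $A\otb A^\opc\cong k[u,v]\otimes k[u',v']$ gives $\dim\big((I\otimes W)\cap(W\otimes I)\big)=24+24-(64-20)=4$. I would produce an explicit basis of this four-dimensional space by resolving the degree-three overlaps of the rewriting rules coming from $r_1,\dots,r_6$ (a diamond-lemma style reduction), recording each basis element simultaneously in its $I\otimes W$ form and its $W\otimes I$ form. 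Applying $\kappa\otimes\id$ and $\id\otimes\kappa$ and then rewriting every product in $T(W)\#T(2)$ into the normal form $W\cdot T(2)$ — using $g\,w=(g\cdot w)g$ and the skew-primitive rule $x\,w=(x\cdot w)+(g\cdot w)x$ for each generator $w$ — converts each basis element into a linear equation in the surviving parameters.

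The main obstacle is this last step: correctly determining the intersection basis and carrying out the smash-product reordering, since $x$ is only skew-primitive and products such as $\kappa(r_i)\,w$ acquire correction terms. I expect two of the four overlaps (those built on $r_1v'$ and on $r_2v$, which both involve the anticommutator relation $r_4$) to carry all the content: matching coefficients should force the $x,gx$-parts of $\kappa(r_1),\kappa(r_2),\kappa(r_3)$ to vanish and, crucially, force $b_4=0$ (this is where $\mathrm{char}(k)\neq 2$ is used again), while the remaining two overlaps hold automatically because $\kappa(r_1),\kappa(r_2),\kappa(r_6)$ commute with $u$ and $u'$. Feeding these back into the condition-(a) links then also kills $\kappa(r_6)$, collapsing everything to $\kappa(r_4)=\alpha\in k$ and $\kappa(r_i)=0$ for $i\neq 4$, which is precisely the asserted parameter space $k$. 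Finally I would close the argument by checking directly that this $\kappa$ — with $\kappa(r_4)=\alpha$ central and all other values zero — satisfies (a) and (c$'$), establishing existence of the stated one-parameter family.
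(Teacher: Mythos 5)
Your proposal is correct and follows the paper's proof essentially step for step: the same parametrization $\kappa(r_i)=\alpha_i+\beta_i g+\gamma_i x+\delta_i gx$, the same adjoint-action computation reducing condition (a) to the constraints in \eqref{eq:T(2)kappa} (including $\kappa(r_5)=0$ and the link between the $x,gx$-parts of $\kappa(r_3),\kappa(r_6)$ and the grouplike part of $\kappa(r_4)$), the same dimension count $\dim\bigl((I\otimes W)\cap(W\otimes I)\bigr)=4$, and the same four equations from condition (c$'$), of which, as you predict, only the two overlaps involving $r_4$ (those based on $r_1v'$ and $r_2v$) are non-vacuous, forcing $\gamma_i=\delta_i=0$ for $i=1,2,3$ and $\beta_4=0$ via $\mathrm{char}(k)\neq 2$. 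The only cosmetic differences (the $Z(T(2))=k$ sanity check via Lemma~\ref{lem:prelim} and producing the intersection basis by resolving overlaps rather than asserting it) do not change the argument.
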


\begin{proof}
We have that $A \otb A^\opc$ is Koszul by Corollary~\ref{cor:twistkoszul} and Proposition~\ref{lem:HopKoszul}, and so by Theorem~\ref{thm:mainWW}, 
$\mathcal{D}_{A \otb A^\opc, \kappa}$  
is a PBW deformation of $(A \otb A^\opc)\# T(2)$ if and only if 
(a)$~~\kappa$ is $T(2)$-invariant and 
(c$'$)$~~ \kappa \otimes \id = \id \otimes \kappa$ as maps defined on the intersection $(I \otimes W) \cap (W \otimes I)$, where $W=V\oplus V'$. 

For each $i$, let $\alpha_i,\beta_i, \gamma_i,\delta_i\in k$ and consider
the function $\kappa$ determined by 
$$\kappa(r_i) = \alpha_i + \beta_i g + \gamma_i x + \delta_i gx, \text{ for } i =1, \dots, 6.$$
Applying the generators $g$ and $x$ to these elements, we have 
\[
\begin{array}{l}
\smallskip
g \cdot (\alpha_i + \beta_i g + \gamma_i x + \delta_i gx) = 
\alpha_i  + \beta_i g + \gamma_i gxg^{-1} + \delta_i xg^{-1} = \alpha_i + \beta_i g - \gamma_i x - \delta_i gx,\\
x \cdot (\alpha_i + \beta_i g + \gamma_i x + \delta_i gx)\\
\quad = 
\alpha_i gS(x) + \alpha_i xS(1) + \beta_i g^2 S(x) + \beta_i xgS(1) 
+ \gamma_i gxS(x) + \gamma_i x^2S(1) + \delta_i g^2xS(x) + \delta_i xgxS(1)\\
\quad = -\alpha_i x + \alpha_i x - \beta_i gx - \beta_i gx \\
\quad = -2 \beta_i gx.
\end{array}
\]
Condition (a) of Theorem~\ref{thm:mainWW} is equivalent to the following two sets of conditions: From the equation $g \cdot \kappa(r_i) = \kappa(g \cdot r_i)$ for $i=1, \dots, 6$, we get
$$\alpha_1 = \beta_1 = 0, \quad \alpha_2 = \beta_2 = 0, \quad\alpha_3 = \beta_3 = 0,\quad
\gamma_4 = \delta_4= 0, \quad \gamma_5 = \delta_5= 0, \quad \alpha_6 = \beta_6 = 0.$$
From the equation $x \cdot \kappa(r_i) = \kappa(x \cdot r_i)$ for $i=1, \dots, 6$, we get
\[
\beta_1= \beta_2= \alpha_5=\beta_5=\gamma_5=0,  \quad \delta_5=-2\beta_3,  \quad
\alpha_3 - \alpha_6 = \beta_3-\beta_6 = \gamma_3-\gamma_6 =0, \quad
\delta_3-\delta_6=-2\beta_4,  \quad \delta_5 = -2\beta_6.
\]
Putting this together,  condition (a) of Theorem~\ref{thm:mainWW} holds if and
only if:
\begin{eqnarray} \label{eq:T(2)kappa}
\begin{array}{lll}
\kappa(r_1) = \gamma_1 x + \delta_1 gx,&&
\kappa(r_2) = \gamma_2 x + \delta_2 gx,\\
\kappa(r_3) = \gamma_3 x + \delta_3 gx, &&
\kappa(r_4) = \alpha_4 + \beta_4 g,\\
\kappa(r_5) = 0, &&
\kappa(r_6) =  \gamma_3 x + (\delta_3+2\beta_4) gx,
\end{array}
\end{eqnarray}
for arbitrary scalars $\alpha_4,\beta_4,\gamma_1,\gamma_2,\gamma_3,\delta_1,\delta_2,\delta_3$.

Let us compute $(I \otimes W) \cap (W \otimes I)$, where $W=V\oplus V'$, and use the condition $\kappa \otimes \id = \id \otimes \kappa$ to derive additional conditions on $\kappa$. 
First, we compute the dimension of the $k$-vector space $(I \otimes W) \cap (W \otimes I)$. Recall that $A \otb A^\opc$ is a quadratic algebra presented as $T( W ) /(I)$. So, $(A \otb A^\opc)_3 = W^{\otimes 3} / ((I \otimes W) + (W \otimes I))$. Hence,
\[
\begin{array}{rl}
\dim W^{\otimes 3}  - \dim(A \otb A^\opc)_3 &= \dim ((I \otimes W) + (W \otimes I))\\
 &= \dim (I \otimes W) + \dim (W \otimes I) - \dim ((I \otimes W) \cap (W \otimes I)).
\end{array}
\]

\noindent Since $\dim W = 4$, $\dim I =6$, and $\dim (A \otb A^\opc)_3 = {3 + (4-1) \choose 4-1}$ (due to the PBW basis of $A \otb A^\opc$), we get that $\dim ((I \otimes W) \cap (W \otimes I)) = 4$. It may be checked that the intersection has vector space basis 
\[
\begin{array}{lll}
s_{uvu'}& := r_1u'-r_5v+r_6u &=  u'r_1-vr_5+ur_6 ,\\
s_{uvv'} &:= r_1v'+ r_3v+ r_4u &= -v'r_1- vr_3 +ur_4 + \lambda ur_5 ,\\
s_{uu'v'} &:= r_2u - r_3u' + r_5v'&= ur_2 - u'r_3 + v'r_5 ,\\
s_{vu'v'} &:= r_2v + r_4u' - r_6v' +\lambda r_5u' &= -vr_2 + u'r_4 + v'r_6 .
\end{array}
\]

\noindent Condition (c$'$) of Theorem~\ref{thm:mainWW} is thus equivalent to the following four equations: 
\[
\begin{array}{rl}
\kappa(r_1)u'-\kappa(r_5)v+\kappa(r_6)u &= u'\kappa(r_1)-v\kappa(r_5)+u\kappa(r_6),\\
\kappa(r_1)v'+\kappa(r_3)v+\kappa(r_4)u &= -v'\kappa(r_1)-v\kappa(r_3)+u\kappa(r_4) + \lambda u \kappa(r_5),\\
\kappa(r_2)u - \kappa(r_3)u' + \kappa(r_5)v' &= u\kappa(r_2) - u'\kappa(r_3) +v'\kappa(r_5),\\
\kappa(r_2)v + \kappa(r_4)u' - \kappa(r_6)v'  +\lambda \kappa(r_5)u' &= -v\kappa(r_2) + u'\kappa(r_4) + v'\kappa(r_6).
\end{array}
\]
These equations force $\gamma_1=\delta_1=\gamma_2=\delta_2=\gamma_3=\delta_3
=\beta_4=0$, so with \eqref{eq:T(2)kappa} we are left with $\kappa(r_4)=\alpha_4$ and $\kappa(r_i)=0$
for all $i\neq 4$. (For instance, $\kappa(r_1)u' = \gamma_1(g \cdot u') x+ \gamma_1(x \cdot u') 1 + \delta_1(1 \cdot u')gx + \delta_1(gx \cdot u')g = u' \kappa (r_1)$, $\kappa(r_5)v = 0 = v \kappa(r_5)$, and $\kappa(r_6) u = u \kappa(r_6)$. So, the first equation does not yield conditions on the deformation parameters.)
\end{proof}

%%%%%%%%%%%%%%%%%%%%%%%%%%%%%%%%%%%%%%%
%%%%%%%%%%%%%%%%%%%%%%%%%%%%%%%%%%%%%%%
%%%%%%%%%%%%%%%%%%%%%%%%%%%%%%%%%%%%%%%

\section{Example:  $H=k C_2$  and $A=k_J[u,v]$ (Jordan plane)}\label{sec:Jordan}

In this section, we consider the {\it Jordan plane}
$$A =k_J[u,v]:= k\langle u,v \rangle/ (r_1:= vu-uv+v^2),$$
which is well known to be Koszul. Take $k$ to be a field of odd characteristic, and take the action of the  Hopf algebra $kC_2$ on $A$, where $C_2 = \langle g ~|~ g^2 = 1 \rangle$, given by 
$$g \cdot u = -u, \quad g \cdot v = -v.$$
We compute PBW deformations of $(A \otb A^\opc) \# kC_2$ for two different braidings
 of $kC_2$-modules.

%%%%%%%%%%%%%%%%%%%%%%%%%%%%%%%%%%%%%%%

\subsection{Nontrivial braiding} \label{kJRC2}
The group algebra  $kC_2$ is quasitriangular with $\text{R}$-matrix:
\begin{equation} \label{RforC2}
\text{R}= \textstyle \frac{1}{2}(1\ot 1 + 1\ot g + g\ot 1 - g\ot g).
\end{equation}
(See, for example, \cite[Example 2.1.6]{Majid95}.) 
Let $\sf c$ be the braiding ${\sf c} = \text{R}\circ \sigma$, where $\sigma$ is the flip map.
We first determine the structure of $A^\opc$ and $A\otb A^\opc$.

Let $u',v'$ denote generators of the
braided-opposite algebra $A^\opc$. Applying the \text{R}-matrix~\eqref{RforC2},  
$$y'z' = m_A  \text{R}(z \otimes y)  = \textstyle \frac{1}{2}(zy-zy-zy-zy) = - zy,$$
for $y,z \in \{ u,v\}$. So, it is clear that $A^\opc$ is generated by $u', v'$ subject to relation $r_2:= v'u'-u'v'-v'^2$.

We claim that the relations of $A \otb A^\opc$ are
\[
\begin{array}{lll}
r_1 := vu-uv+v^2, && r_2 := v'u'-u'v'-v'^2,\\
r_3 := uu'+u'u, && r_4 := vu'+u'v,\\
r_5 := uv'+v'u, && r_6 := vv'+v'v.
\end{array}
\]
 This holds in the same fashion as the above calculation of the defining
relation for $A^\opc$. For instance, $(m_A \otimes m_{A^\opc})(1 \otimes {\sf c} \otimes 1)(1 \otimes u' \otimes v \otimes 1) = -vu'$, which yields relation $r_4$.

\begin{proposition} \label{prop:kJRC2}
Let $A=k_J[u,v]$. Then $A\otb A^\opc$ is the algebra generated by $u,v,u',v'$ subject
to the relations $r_1,\ldots, r_6$ above. 
The algebra $(A \otb A^\opc) \# kC_2$ admits a 3-parameter PBW deformation 
$$\mathcal{D}_{A \otb A^\opc, \kappa} = 
  \frac{ k\langle u,v,u',v'\rangle \# kC_2}{ (r_i-\kappa(r_i))},
$$
where $\kappa(r_1)=\lambda_1$, $\kappa(r_2)=\lambda_2$, $\kappa(r_3)=\lambda_3 g$
for $\lambda_1,\lambda_2,\lambda_3\in k$, and $\kappa(r_i)=0$  for $i=4,5,6$.  
Moreover, all PBW deformations for which $\kappa =\kappa^C$ (as assumed in Hypothesis~\ref{deg0}) are of this form.
\end{proposition}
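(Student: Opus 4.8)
The plan is to invoke Theorem~\ref{thm:mainWW} with $R = A\otb A^\opc$ and $H = kC_2$. As a first step I would record that $A\otb A^\opc$ is Koszul, combining Proposition~\ref{lem:HopKoszul} (for $A^\opc$) with Corollary~\ref{cor:twistkoszul} (for the braided product), exactly as in the proof of Proposition~\ref{prop:T(2)}. Since Hypothesis~\ref{deg0} puts us in the case $\kappa = \kappa^C$, $\kappa^L\equiv 0$, the theorem reduces to verifying conditions (a) and~(c$'$). Presenting $A\otb A^\opc = T(W)/(I)$ with $W = V\oplus V'$ and $I = \mathrm{span}_k\{r_1,\dots,r_6\}$, I would write the unknown constant map as $\kappa(r_i) = a_i + b_i g$ with $a_i,b_i\in k$, using the $k$-basis $\{1,g\}$ of $kC_2$.

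For condition~(a), the key observation is that $g$ negates each of the four generators and hence fixes every product of two generators; in particular $g\cdot r_i = r_i$, so each $r_i$ is $H$-invariant. Because $kC_2$ is commutative we have $H^H = Z(kC_2) = kC_2$, and so Lemma~\ref{lem:prelim} shows that condition~(a) is satisfied for \emph{every} choice of $a_i$ and $b_i$. Thus, in contrast to the $T(2)$ example where non-cocommutativity forced several parameters to vanish, the $H$-invariance requirement here imposes no constraint, and all of the content will come from condition~(c$'$).

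Condition~(c$'$) asks that $\kappa\otimes\id = \id\otimes\kappa$ on $(I\otimes W)\cap(W\otimes I)$. The same dimension count as in Propositions~\ref{prop:third-root} and~\ref{prop:T(2)}, namely $\dim W = 4$, $\dim I = 6$, and $\dim(A\otb A^\opc)_3 = \binom{6}{3} = 20$ from the PBW basis, yields $\dim\bigl((I\otimes W)\cap(W\otimes I)\bigr) = 24 + 24 - (64 - 20) = 4$. The first genuine task is to exhibit an explicit basis of this four-dimensional space, one vector for each degree-three monomial at which two of the relations $r_1,\dots,r_6$ overlap, writing each basis vector in its two forms $\sum_i r_i w_i = \sum_j w_j' r_j$ coming from $I\otimes W$ and $W\otimes I$ respectively. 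This parallels the four-element basis $s_{uvu'}, s_{uvv'}, s_{uu'v'}, s_{vu'v'}$ found in the proof of Proposition~\ref{prop:T(2)}, though the overlaps differ because the Jordan-plane relations $r_1,r_2$ are not the commutator relations.

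Replacing each $r_i$ by $\kappa(r_i)$ in these four identities turns condition~(c$'$) into four equations in $(A\otb A^\opc)\#kC_2$. To evaluate them I would push every $g$ to the right using $gw = -wg$ for each generator $w$, so that both sides become $k$-linear combinations of terms $w$ and $wg$; note in particular that $\kappa(r_i)w - w\kappa(r_i) = -2b_i\,wg$, which (since $\mathrm{char}(k)$ is odd) shows that only the grouplike parts $b_i$ can obstruct commutation. Matching coefficients of each $w$ and each $wg$ then produces a linear system in the twelve unknowns $a_1,\dots,a_6,b_1,\dots,b_6$; solving it should force $b_1 = b_2 = 0$, $a_3 = 0$, and $\kappa(r_4)=\kappa(r_5)=\kappa(r_6)=0$, leaving exactly the three free parameters $\lambda_1 = a_1$, $\lambda_2 = a_2$, $\lambda_3 = b_3$ — i.e.\ the stated family. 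The main obstacle is precisely this last stretch: correctly pinning down the intersection basis and then tracking the twelve parameters through the four relations without sign errors, since the $kC_2$-structure interleaves the scalar and grouplike contributions in every equation.
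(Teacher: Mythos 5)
Your proposal follows the paper's proof essentially verbatim: Koszulity of $A\otb A^\opc$ via Proposition~\ref{lem:HopKoszul} and Corollary~\ref{cor:twistkoszul}, condition (a) of Theorem~\ref{thm:mainWW} dispatched for all twelve parameters by Lemma~\ref{lem:prelim} since $g\cdot r_i=r_i$ and $Z(kC_2)=kC_2$, the identical dimension count giving $\dim\bigl((I\otimes W)\cap(W\otimes I)\bigr)=4$, and condition (c$'$) reduced to four equations obtained from a basis of that intersection. The one step you describe but do not execute --- exhibiting the four basis elements (the paper's first is $r_1u'-r_3v+r_4u+r_4v=u'r_1+vr_3-ur_4+vr_4$) and matching coefficients of $w$ and $wg$ using $\kappa(r_i)w-w\kappa(r_i)=-2b_i\,wg$ together with $\kappa(r_i)w+w\kappa(r_i)=2a_i\,w$ --- is routine and yields exactly the constraints you predict ($b_1=b_2=a_3=0$ and $\kappa(r_4)=\kappa(r_5)=\kappa(r_6)=0$), in agreement with the paper.
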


\begin{proof}
Since $A$ is Koszul, by Corollary~\ref{cor:twistkoszul} and Proposition~\ref{lem:HopKoszul}, $A\otb A^\opc$ is Koszul, and 
we may apply Theorem~\ref{thm:mainWW} to determine PBW deformations of $(A \otb A^\opc) \# kC_2$. (Again, we do this in the case when $\kappa^L=0$.) Say $\kappa(r_i) = \alpha_i + \beta_i g \in kC_2$, for $i = 1, \dots, 6$. Note that $g \cdot r_i = r_i$, for all $i$. The $kC_2$-invariance of  $\kappa$ imposes no conditions on $\alpha_i, \beta_i$ by Lemma~\ref{lem:prelim}; indeed $Z(kC_2) = kC_2$. So Theorem~\ref{thm:mainWW}(a) is satisfied.

Now we derive conditions on $\alpha_i, \beta_i$ from the assumption that $\kappa \otimes \id = \id \otimes \kappa$ as in Theorem~\ref{thm:mainWW}(c$'$). 
Let $V$ (respectively, $V'$) be the vector space with basis $u,v$
(respectively, $u',v'$). 
Setting $W=V\oplus V'$, since $\dim W = 4$, $\dim I =6$, and $\dim (A \otb A^\opc)_3 = {3 + (4-1) \choose 4-1}= 20$ (the cardinality of the degree 3 elements in the PBW basis of $A \otb A^\opc$), we get 
$$\dim ((I \otimes W) \cap (W \otimes I)) 
~=~  \dim(I \otimes W) + \dim(W \otimes I)  - \dim W^{\otimes 3} + \dim(A \otb A^\opc)_3 ~=~ 4.$$
 The relations below  hold by applying the equation of 
Theorem~\ref{thm:mainWW}(c$'$) to 
a basis of $(I \otimes W) \cap (W \otimes I)$:
 \[
\begin{array}{rl}
\kappa(r_1)u' - \kappa(r_3)v + \kappa(r_4) u + \kappa(r_4) v &= \ 
u'\kappa(r_1) + v\kappa(r_3) - u\kappa(r_4) + v\kappa(r_4) ,\\
\kappa(r_1)v' - \kappa(r_5)v + \kappa(r_6) u + \kappa(r_6) v &= \ 
v'\kappa(r_1) + v\kappa(r_5) - u\kappa(r_6) + v\kappa(r_6) ,\\
\kappa(r_2)u - \kappa(r_3)v' + \kappa(r_5) u' - \kappa(r_5) v' &= \ 
u\kappa(r_2) + v'\kappa(r_3) - u'\kappa(r_5) - v'\kappa(r_5) ,\\
\kappa(r_2)v - \kappa(r_4)v' + \kappa(r_6) u' - \kappa(r_6) v' &= \ 
v\kappa(r_2) + v'\kappa(r_4) - u'\kappa(r_6) - v'\kappa(r_6).
\end{array}
\]
For instance,
$$r_1u'-r_3v+r_4u+r_4v ~=~vuu'-uvu'+v^2u'-uu'v-u'uv+vu'u+u'vu+vu'v+u'v^2~=~u'r_1+vr_3-ur_4+vr_4.$$
Now the  equations above are satisfied if and only if 
$\alpha_i = 0$ for $i =3,4,5,6$ and $\beta_j = 0$ for $j =1,2,4,5,6$. (For instance, the first equation implies that $\beta_1 = \alpha_3 = \alpha_4 = \beta_4 =0$.)
Thus the result holds. 
\end{proof}

%%%%%%%%%%%%%%%%%%%%%%%%%%%%%%%%%%%%%%%

\subsection{Trivial braiding} \label{kJtrivR}
We next compare with the more traditional choice of \text{R}-matrix for a group algebra, $\text{R} = 1\ot 1$, corresponding to the trivial braiding ${\sf c}=\sigma$.
Let  $A$ be the Jordan plane as before.

The braided-opposite algebra $A^\opc$, generated by $u'$ and $v'$, is just the
opposite algebra $A^{\op}$, which is generated by $u', v'$ subject to relation $r_2:= v'u'-u'v'-v'^2$.

The braided product $A\otb A^{\op}$ is the ordinary tensor product $A\ot A^{\op}$, and
the relations of $A \ot A^{\op}$ are
\[
\begin{array}{lll}
r_1 := vu-uv+v^2, && r_2 := v'u'-u'v'-v'^2,\\
r_3 := uu'-u'u, && r_4 := vu'-u'v,\\
r_5 := uv'-v'u, && r_6 := vv'-v'v.
\end{array}
\]

\begin{proposition} \label{prop:kJtrivR} 
Let $A=k_J[u,v]$. Then $A\ot A^{\op}$ is the algebra generated by $u,v,u',v'$ subject
to the relations $r_1,\ldots, r_6$ above. 
The algebra $(A \ot A^{\op}) \# kC_2$ admits a 3-parameter PBW deformation 
$$\mathcal{D}_{A \otb A^\opc, \kappa} =
   \frac{ k\langle u,v,u',v'\rangle \# kC_2}{(r_i-\kappa(r_i))},
$$
where $\kappa(r_i)=\alpha_i$ for $i=1,2,3$, and $\kappa(r_j) = 0$ for $j = 4,5,6$. 
Moreover, all PBW deformations for which $\kappa =\kappa^C$ (as assumed in Hypothesis~\ref{deg0}) are of this form. 
\end{proposition}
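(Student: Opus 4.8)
The plan is to apply Theorem~\ref{thm:mainWW} exactly as in the proof of Proposition~\ref{prop:kJRC2}, since the underlying Koszul algebra $A\ot A^{\op}$ and the action of $kC_2$ are structurally identical to the nontrivial-braiding case (only the ``mixed'' relations $r_3,r_4,r_5,r_6$ have flipped signs, and $r_1,r_2$ are unchanged). First I would record that $A\ot A^{\op}$ is Koszul: it is the ordinary tensor product of the Koszul algebra $A=k_J[u,v]$ with its opposite $A^{\op}$, which is also Koszul, and this is the braided product for the trivial braiding ${\sf c}=\sigma$, so Koszulity follows from Corollary~\ref{cor:twistkoszul} together with Proposition~\ref{lem:HopKoszul}. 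Thus Theorem~\ref{thm:mainWW} applies and, because we impose Hypothesis~\ref{deg0} ($\kappa=\kappa^C$, $\kappa^L\equiv 0$), the PBW condition reduces to verifying (a) $kC_2$-invariance of $\kappa$ and (c$'$) the identity $\kappa\ot\id=\id\ot\kappa$ on $(I\ot W)\cap(W\ot I)$, where $W=V\oplus V'$.

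Next I would dispose of condition~(a). Writing $\kappa(r_i)=\alpha_i+\beta_i g\in kC_2$, I note that $g\cdot r_i=r_i$ for every $i$ (since $g$ acts by $-1$ on each of $u,v,u',v'$, it acts by $+1$ on each quadratic relation), so each $r_i$ is $kC_2$-invariant. By Lemma~\ref{lem:prelim}, condition~(a) amounts to $\kappa(r_i)\in Z(kC_2)=kC_2$, which holds automatically; hence (a) imposes no constraints and all $\alpha_i,\beta_i$ remain free at this stage.

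The substance is condition~(c$'$), and here the key computational step is to exhibit an explicit basis of the four-dimensional intersection $(I\ot W)\cap(W\ot I)$. The dimension count is identical to that in Proposition~\ref{prop:kJRC2}: with $\dim W=4$, $\dim I=6$, and $\dim(A\ot A^{\op})_3=\binom{3+3}{3}=20$ coming from the PBW basis, inclusion-exclusion gives $\dim((I\ot W)\cap(W\ot I))=4$. I would then write down four elements $s_{uvu'},s_{uvv'},s_{uu'v'},s_{vu'v'}$ (as $k$-linear combinations of $r_iy$ and $yr_i$ for $y\in\{u,v,u',v'\}$) lying in both $I\ot W$ and $W\ot I$, analogous to those in the trivial-braiding/twisted computations elsewhere in the paper but with the commuting signs of $r_3,\dots,r_6$; their visible linear independence makes them a basis. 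Applying (c$'$) to these four spanning elements then converts to four equations in the $\alpha_i,\beta_i$, which I would solve by collecting coefficients of the monomial basis of $(A\ot A^{\op})\# kC_2$ in degree~1 (i.e., terms $1,g,u,v,u',v',ug,\dots$). I expect these to force $\beta_i=0$ for all $i$ (the off-diagonal ``mixing'' relations leave no room for group-algebra-valued deformations in this commutative setting) and $\alpha_i=0$ for $i=4,5,6$, while leaving $\alpha_1,\alpha_2,\alpha_3$ free, yielding the claimed $3$-parameter family.

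The main obstacle is purely the bookkeeping in condition~(c$'$): correctly guessing the coefficients in the four basis elements of $(I\ot W)\cap(W\ot I)$ so that they genuinely lie in both $I\ot W$ and $W\ot I$, and then tracking the noncommutative products $\kappa(r_i)\,y$ versus $y\,\kappa(r_i)$ through the smash-product relations $gy=-yg$ for $y\in\{u,v,u',v'\}$. Since here $\kappa(r_i)$ ranges over $\alpha_i+\beta_i g$ and each generator anticommutes with $g$, the potential obstruction is that a term $\beta_i g\,y=-\beta_i y\,g$ picks up a sign relative to $y\,\beta_i g=\beta_i y g$, which is precisely the mechanism that will kill the $\beta_i$; verifying this cancellation carefully in each of the four equations is the one genuinely delicate point, though it is routine once the basis is in hand.
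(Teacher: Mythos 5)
Your proposal is correct and follows essentially the same route as the paper: Koszulity via Corollary~\ref{cor:twistkoszul} and Proposition~\ref{lem:HopKoszul}, condition (a) of Theorem~\ref{thm:mainWW} dispatched automatically through Lemma~\ref{lem:prelim} since $g\cdot r_i=r_i$ and $Z(kC_2)=kC_2$, and condition (c$'$) checked on the four-dimensional intersection $(I\ot W)\cap(W\ot I)$ obtained by the same inclusion--exclusion count, with the sign-altered equations (relative to Proposition~\ref{prop:kJRC2}) forcing $\beta_j=0$ for all $j$ and $\alpha_j=0$ for $j=4,5,6$. Your identified mechanism --- the anticommutation $gy=-yg$ producing the sign discrepancies that kill the $g$-coefficients --- is exactly what drives the paper's computation, and your level of detail on the basis elements matches the paper's own proof, which likewise only records the resulting four equations.
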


\begin{proof}
As in the proof of Proposition~\ref{prop:kJRC2},  Theorem~\ref{thm:mainWW}(a) is automatically satisfied. To apply Theorem~\ref{thm:mainWW}(c$'$), we need to alter the 4 equations derived from the basis of $(I \otimes W) \cap (W \otimes I)$ as follows: 
 \[
\begin{array}{rl}
\kappa(r_1)u' + \kappa(r_3)v - \kappa(r_4) u - \kappa(r_4) v &= \ 
u'\kappa(r_1) + v\kappa(r_3) - u\kappa(r_4) + v\kappa(r_4) ,\\
\kappa(r_1)v' + \kappa(r_5)v - \kappa(r_6) u - \kappa(r_6) v &= \ 
v'\kappa(r_1) + v\kappa(r_5) - u\kappa(r_6) + v\kappa(r_6) ,\\
\kappa(r_2)u - \kappa(r_3)v' + \kappa(r_5) u' - \kappa(r_5) v' &= \ 
u\kappa(r_2) - v'\kappa(r_3) + u'\kappa(r_5) + v'\kappa(r_5) ,\\
\kappa(r_2)v - \kappa(r_4)v' + \kappa(r_6) u' - \kappa(r_6) v' &= \ 
v\kappa(r_2) - v'\kappa(r_4) + u'\kappa(r_6) + v'\kappa(r_6).
\end{array}
\]
These equations are satisfied if and only if 
$\alpha_i = 0$ for $i =4,5,6$ and $\beta_j = 0$ for $j =1,\dots,6$.
Thus the result holds. 
\end{proof}

%%%%%%%%%%%%%%%%%%%%%%%%%%%%%%%%%%%%%%%
%%%%%%%%%%%%%%%%%%%%%%%%%%%%%%%%%%%%%%%
%%%%%%%%%%%%%%%%%%%%%%%%%%%%%%%%%%%%%%%

\section{Example:  $H=kC_2$  and $A=S(a,b,c)$ (3-dimensional Sklyanin algebra)}\label{sec:Sk} 

In this section, take $k$ to be a field of characteristic not equal to 2 or 3. Consider the algebra:
$$S:=S(a,b,c) = k\langle u,v,w \rangle/(r_1:=auv+bvu+cw^2, ~~r_2:=avw+bwv+cu^2,~~ r_3:=awu+buw+cv^2),$$
for $a,b,c \in k$.
If $abc\neq 0$ and $a,b,c$ are not all third roots of unity, then $S(a,b,c)$ is a {\it 3-dimensional Sklyanin algebra}, and in this case, $S$ is Koszul by a result of J. Zhang \cite[Theorem~5.11]{Smith}.

We consider the action of the  Hopf algebra $kC_2$ on $S$, where $C_2 = \langle g ~|~ g^2 = 1 \rangle$, given by 
$$g \cdot u = -u, \quad g \cdot v = -v, \quad g \cdot w =-w.$$
We compute PBW deformations of $(S \otb S^\opc) \# kC_2$ with respect to the two  $\text{R}$-matrices of $kC_2$ given in the previous section. 

%%%%%%%%%%%%%%%%%%%%%%%%%%%%%%%%%%%%%%%

\subsection{Nontrivial braiding} \label{sec:Ska}

Let $\text{R}$ be the $\text{R}$-matrix ~\eqref{RforC2} for $kC_2$ so that ${\sf c}=\text{R}\circ\sigma$,
where $\sigma$ is the flip map. 
The braided-opposite algebra $S^\opc$ is generated by copies $u', v', w'$ of $u,v,w$. Products of generators may be computed as follows: 
$$y'z' = m_S  \text{R}(z \otimes y)  = \textstyle \frac{1}{2}(zy-zy-zy-zy) = - zy,$$
for any $y,z \in  \{u,v,w\}$. So, for example, $a v'u' + b u'v' + c w'^2 = -auv - bvu - cw^2 =0$.
Further calculations show 
that $S^\opc$ is generated by $u', v', w'$ subject to relations $r_4$, $r_5$, $r_6$ below. In other words, 
$$S^\opc ~=~ S(b,a,c).$$

 The relations of $S \otb S^\opc$ are
\[
\begin{array}{lllll}
r_1:=auv+bvu+cw^2, &&r_2:=avw+bwv+cu^2, && r_3:=awu+buw+cv^2,\\
r_4:=bu'v'+av'u'+cw'^2, &&r_5:=bv'w'+aw'v'+cu'^2, && r_6:=bw'u'+au'w'+cv'^2,\\
r_7:=uu'+u'u, &&r_8:=vu'+u'v, && r_9:=wu'+u'w,\\
r_{10}:=uv'+v'u, &&r_{11}:=vv'+v'v, && r_{12}:=wv'+v'w,\\
r_{13}:=uw'+w'u, &&r_{14}:=vw'+w'v, && r_{15}:=ww'+w'w.
\end{array}
\]
For instance,  $(m_S \otimes m_{S^\opc})(1 \otimes {\sf c} \otimes 1)(1 \otimes u' \otimes w \otimes 1) = -wu'$, which yields relation $r_9$.

\begin{proposition} \label{prop:Ska} 
Let $S=S(a,b,c)$ be the Sklyanin algebra defined above.
Then $S\otb S^\opc$ is the algebra generated by $u,v,w,u',v',w'$ subject
to the relations $r_1,\ldots, r_{15}$ above. 
The smash product $(S \otb S^\opc) \# kC_2$ admits a 6-parameter PBW deformation $\mathcal{D}_{S \otb S^\opc, \kappa}$  if $a\neq b$,  and a 15-parameter PBW deformation  $\mathcal{D}_{S \otb S^\opc, \kappa}$ if $a=b$. In particular,
\begin{enumerate}
\item If $a \neq b$, then $\kappa(r_i) = \alpha_i \in k$ for $i = 1, \dots, 6$, and $\kappa(r_j) = 0$ for $j = 7, \dots, 15$.
\item If $a = b$, then $\kappa(r_i) = \alpha_i \in k$ for $i = 1, \dots, 15$. 
\end{enumerate}
Moreover, all PBW deformations for which $\kappa =\kappa^C$ (as assumed in Hypothesis~\ref{deg0}) are of this form. 
\end{proposition}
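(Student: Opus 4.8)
The plan is to apply Theorem~\ref{thm:mainWW} in the degree-0 setting of Hypothesis~\ref{deg0}, so that only conditions (a) and (c$'$) need to be checked. First I would record that $S\otb S^\opc$ is Koszul: since $S(a,b,c)$ is Koszul and the $kC_2$-action merely rescales the generators (hence preserves the grading), Proposition~\ref{lem:HopKoszul} gives that $S^\opc$ is Koszul and Corollary~\ref{cor:twistkoszul} gives that $S\otb S^\opc$ is Koszul, so Theorem~\ref{thm:mainWW} applies. Writing each $\kappa(r_i)=\alpha_i+\beta_i g$ with $\alpha_i,\beta_i\in k$, condition (a) is automatic: every $r_i$ is quadratic and $g$ acts by $-1$ on each generator, so $g\cdot r_i=r_i=\epsilon(g)r_i$, and since $Z(kC_2)=kC_2$, Lemma~\ref{lem:prelim} imposes no restriction.

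The substance is condition (c$'$), namely $\kappa\otimes\id=\id\otimes\kappa$ on $(I\otimes W)\cap(W\otimes I)$ with $W=V\oplus V'$. A Hilbert-series count as in the earlier propositions, using $\dim W=6$, $\dim I=15$, and $\dim(S\otb S^\opc)_3=\binom{8}{5}=56$, gives
$$\dim\bigl((I\otimes W)\cap(W\otimes I)\bigr)=15\cdot 6+6\cdot 15-6^3+56=20.$$
I would exhibit an explicit basis organized by bidegree in (unprimed, primed) generators: one element in bidegree $(3,0)$, nine in $(2,1)$, nine in $(1,2)$, and one in $(0,3)$. The $(3,0)$ element is the single Sklyanin syzygy, which I would check equals $u r_2+v r_3+w r_1=r_2 u+r_3 v+r_1 w$ (cyclically invariant under $u\mapsto v\mapsto w\mapsto u$); the $(0,3)$ element is its primed analogue. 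The nine $(2,1)$ elements $\omega_{i,z'}$, indexed by a Sklyanin relation $r_i$ $(i\in\{1,2,3\})$ and a primed generator $z'$, are produced by moving $z'$ leftward through the mixed (anticommutator) relations $r_7,\dots,r_{15}$; for instance
$$\omega_{1,u'}:=r_1 u'+a\, r_7 v+b\, r_8 u+c\, r_9 w=u' r_1+a\, u r_8+b\, v r_7+c\, w r_9,$$
and the $(1,2)$ elements are the $S\leftrightarrow S^\opc$ mirror images.

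I would then apply (c$'$) to each basis element, using the smash-product relation $gy=-yg$, so that $\kappa(r_i)y=\alpha_i y-\beta_i yg$ while $y\kappa(r_i)=\alpha_i y+\beta_i yg$. Applied to the pure syzygy the scalar ($\alpha$) parts cancel identically, leaving $\alpha_1,\alpha_2,\alpha_3$ unconstrained, while the $g$-parts force $\beta_1=\beta_2=\beta_3=0$; the primed syzygy gives $\beta_4=\beta_5=\beta_6=0$ with $\alpha_4,\alpha_5,\alpha_6$ free. Applied to $\omega_{1,u'}$, comparing coefficients of $u',u,v,w$ and of their $g$-translates yields $\beta_1=0$ (from the $u'g$ term), the doubled-variable $r_9$-term gives $2c\beta_9=0$ hence $\beta_9=0$ (using $\mathrm{char}\,k\neq 2$ and $c\neq 0$), and the $r_7,r_8$-terms give $(a-b)\alpha_7=(a-b)\alpha_8=0$. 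Cycling through $\omega_{2,u'},\omega_{3,u'}$ (whose doubled-variable terms kill $\beta_7$ and $\beta_8$), then the $v'$- and $w'$-families and the $(1,2)$ mirror, forces \emph{all} of $\beta_7,\dots,\beta_{15}$ to vanish and yields $(a-b)\alpha_j=0$ for every mixed index $j=7,\dots,15$, with no relation among the $\alpha_j$ and none on $\alpha_1,\dots,\alpha_6$. Hence every $\beta_i=0$; if $a\neq b$ the factor $a-b$ is invertible, forcing $\alpha_7=\cdots=\alpha_{15}=0$ and leaving the six parameters $\alpha_1,\dots,\alpha_6$, whereas if $a=b$ the conditions $(a-b)\alpha_j=0$ are vacuous, leaving all fifteen $\alpha_i$ free. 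Since (a) and (c$'$) are solved completely, this is exactly the claimed parameterization, including the ``moreover'' clause.

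The main obstacle I anticipate is the construction and bookkeeping of the eighteen mixed overlaps: one must rewrite each $r_i z'$ into $W\otimes I$ form by iterated use of the nine anticommutator relations and keep track of precisely which Sklyanin coefficient ($a$, $b$, or $c$) multiplies each correction, since it is exactly the asymmetry between the $a$- and $b$-weighted corrections that manufactures the decisive factor $(a-b)$, while the $c$-weighted doubled-variable corrections annihilate the $\beta$'s. Exploiting the threefold cyclic symmetry of the Sklyanin relations together with the $S\leftrightarrow S^\opc$ symmetry reduces this to computing essentially one representative overlap, after which the rest follow by symmetry.
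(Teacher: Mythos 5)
Your proposal is correct and follows essentially the same route as the paper's proof: Koszulity of $S\otb S^\opc$ via Proposition~\ref{lem:HopKoszul} and Corollary~\ref{cor:twistkoszul}, condition (a) of Theorem~\ref{thm:mainWW} dispatched by Lemma~\ref{lem:prelim} since $g\cdot r_i=r_i$ and $Z(kC_2)=kC_2$, the same dimension count $\dim\bigl((I\otimes W)\cap(W\otimes I)\bigr)=20$, and the same family of overlap identities (your $\omega_{1,u'}$ is exactly the paper's $(u,v,u')$ relation), yielding $\beta_i=0$ for all $i$ and $(a-b)\alpha_j=0$ for $j=7,\dots,15$. The only difference is presentational: the paper writes out all twenty equations explicitly, whereas you compute one representative in each bidegree and invoke the cyclic and $S\leftrightarrow S^\opc$ symmetries, which is a legitimate streamlining of the same argument.
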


\begin{proof}
Since $S$ is Koszul, we may apply Proposition~\ref{lem:HopKoszul}, Corollary~\ref{cor:twistkoszul}, and then apply Theorem~\ref{thm:mainWW} to compute PBW deformations of $(S \otb S^\opc) \# kC_2$. (Again, we do this in the case when $\kappa^L=0$, so that $\kappa=\kappa^C$.) Say $\kappa(r_i) = \alpha_i + \beta_i g \in kC_2$, for $i = 1, \dots, 15$. Since $g \cdot r_i = r_i$, for all $i$, the $kC_2$-invariance of  $\kappa$ imposes no conditions on $\alpha_i, \beta_i$ by Lemma~\ref{lem:prelim}; indeed $Z(kC_2) = kC_2$. So Theorem~\ref{thm:mainWW}(a) is satisfied.

Now we derive conditions on $\alpha_i, \beta_i$ from the assumption that $\kappa \otimes \id = \id \otimes \kappa$ as in  Theorem~\ref{thm:mainWW}(c$'$). Let $V$ be the vector space with basis $u,v,w$, let $V'$ be a copy of $V$ with basis $u',v',w'$, and $W=V\oplus V'$. 
Since $\dim W = 6$, $\dim I =15$, and $\dim (S \otb S^\opc)_3 = {3 + (6-1) \choose 6-1}= 56$ (due to the PBW basis of $S \otb S^\opc$), we get 
$$\dim ((I \otimes W) \cap (W \otimes I)) 
~=~  \dim(I \otimes W) + \dim(W \otimes I)  - \dim W^{\otimes 3} + \dim(S \otb S^\opc)_3 ~=~ 20.$$
 The following relations are derived from a basis of $(I \otimes W) \cap (W \otimes I)$ (we keep track of the basis elements by the indeterminates of the left column):
\[
\hspace{-.15in}
{\small
\begin{array}{rrl}
\medskip
(u,v,w): &  
\kappa(r_1)w + \kappa(r_2)u + \kappa(r_3)v &
= w\kappa(r_1) + u\kappa(r_2) + v\kappa(r_3) \\
\medskip
(u,v,u'):  &
\kappa(r_1)u' + a \kappa(r_7)v + b \kappa(r_8)u + c \kappa(r_9) w &
= u'\kappa(r_1) + b v\kappa(r_7) + a u \kappa(r_8) + c w \kappa(r_9)\\
\medskip
(u,v,v'):  &
\kappa(r_1)v' + a \kappa(r_{10})v + b \kappa(r_{11})u + c \kappa(r_{12}) w &
= v'\kappa(r_1) + b v\kappa(r_{10}) + a u \kappa(r_{11}) + c w \kappa(r_{12}) \\
\medskip
(u,v,w'): &
\kappa(r_1)w' + a \kappa(r_{13})v + b \kappa(r_{14})u + c \kappa(r_{15}) w &
= w'\kappa(r_1) + b v\kappa(r_{13}) + a u \kappa(r_{14}) + c w \kappa(r_{15})
\end{array}
}
\]
\[
\hspace{-.15in}
{\small
\begin{array}{rrl}
\medskip
(u,w,u'): &
\kappa(r_3)u' + a \kappa(r_9)u + b \kappa(r_7)w + c \kappa(r_8) v &
= u'\kappa(r_3) + aw\kappa(r_7) + bu \kappa(r_9) + c v \kappa(r_8) \\
\medskip
(u,w,v'):  &
\kappa(r_3)v' + a \kappa(r_{12})u + b \kappa(r_{10})w + c \kappa(r_{11}) v &
= v'\kappa(r_3) + aw\kappa(r_{10}) + bu\kappa(r_{12}) + c v \kappa(r_{11}) \\
\medskip
(u,w,w'): &
\kappa(r_3)w' + a \kappa(r_{15})u + b \kappa(r_{13})w + c \kappa(r_{14}) v &
= w'\kappa(r_3) + aw\kappa(r_{13}) + bu \kappa(r_{15}) + c v \kappa(r_{14}) \\
\medskip
(u,u',v'):  &
\kappa(r_4)u + b \kappa(r_7)v' + a \kappa(r_{10})u' + c \kappa(r_{13}) w' &
= u \kappa(r_4) + b u'\kappa(r_{10}) + av' \kappa(r_7) + c w' \kappa(r_{13}) 
\end{array}
}
\]
\[
\hspace{-.15in}
{\small
\begin{array}{rrl}
\medskip
(u,u',w'):  &
\kappa(r_6)u + b \kappa(r_{13})u' + a \kappa(r_7) w'+ c \kappa(r_{10}) v' &
= u\kappa(r_6) + bw'\kappa(r_7) + au' \kappa(r_{13}) + cv' \kappa(r_{10}) \\
\medskip
(u,v',w'):  & 
\kappa(r_5)u + b \kappa(r_{10})w' + a \kappa(r_{13})v' + c \kappa(r_7) u' &
= u\kappa(r_5) + bv'\kappa(r_{13}) + aw' \kappa(r_{10}) + cu' \kappa(r_7)\\
\medskip
(v,w,u'):  &
\kappa(r_2)u' + a \kappa(r_8)w + b \kappa(r_9)v + c \kappa(r_7) u &
= u'\kappa(r_2) + bw\kappa(r_8) + av \kappa(r_9) + cu \kappa(r_7) \\
\medskip
(v,w,v'):  & 
\kappa(r_2)v' + a \kappa(r_{11})w + b \kappa(r_{12})v + c \kappa(r_{10}) u &
= v'\kappa(r_2) + bw\kappa(r_{11}) + av \kappa(r_{12}) + cu \kappa(r_{10})
\end{array}
}
\]
\[
\hspace{-.15in}
{\small
\begin{array}{rrl}
\medskip
(v,w,w'):  &
\kappa(r_2)w' + a \kappa(r_{14})w + b \kappa(r_{15})v + c \kappa(r_{13}) u &
= w'\kappa(r_2) + bw\kappa(r_{14}) + av \kappa(r_{15}) + cu \kappa(r_{13}) \\
\medskip
(v,u',v'):  & 
\kappa(r_4)v + b \kappa(r_8)v' + a \kappa(r_{11})u' + c \kappa(r_{14}) w' &
= v\kappa(r_4) + bu'\kappa(r_{11}) + av' \kappa(r_8) + cw' \kappa(r_{14})\\
\medskip
(v,u',w'):  &
\kappa(r_6)v+ b \kappa(r_{14})u' + a \kappa(r_8)w' + c \kappa(r_{11}) v' &
= v\kappa(r_6) + bw'\kappa(r_8) + au' \kappa(r_{14}) + cv' \kappa(r_{11}) \\
\medskip
(v,v',w'):  & 
\kappa(r_5)v + b \kappa(r_{11})w' + a \kappa(r_{14})v' + c \kappa(r_8) u' &
= v\kappa(r_5) + bv'\kappa(r_{14}) + aw'\kappa(r_{11}) + cu' \kappa(r_8)
\end{array}
}
\]
\[
\hspace{-.15in}
{\small
\begin{array}{rrl}
\medskip
(w,u',v'):  &
\kappa(r_4)w + b\kappa(r_9)v' + a \kappa(r_{12})u' + c \kappa(r_{15}) w' &
= w\kappa(r_4) + bu'\kappa(r_{12}) + av' \kappa(r_9) + c w' \kappa(r_{15}) \\
\medskip
(w,u',w'): & 
\kappa(r_6)w + b \kappa(r_{15})u' + a \kappa(r_9)w' + c \kappa(r_{12}) v' &
= w\kappa(r_6) + bw'\kappa(r_9) + au' \kappa(r_{15}) + cv' \kappa(r_{12}) \\
\medskip
(w,v',w'): &
\kappa(r_5)w + b\kappa(r_{12})w' + a \kappa(r_{15})v' + c \kappa(r_9) u' &
= w\kappa(r_5) + bv'\kappa(r_{15}) + aw' \kappa(r_{12}) + cu' \kappa(r_9) \\
\medskip
(u',v',w'):  & 
\kappa(r_4)w' + \kappa(r_5)u' + \kappa(r_6)v' &
= w'\kappa(r_4) + u'\kappa(r_5) + v'\kappa(r_6) .
\end{array}
}
\]
To get the first equation, for instance, we use
$$r_1w+r_2u+r_3v ~=~ auvw+bvuw+cw^3+avwu+bwvu+cu^3+awuv+buwv+cv^3 ~=~ wr_1+ur_2+vr_3.$$
Now the equations above are equivalent to the conditions given, in the statement of
the proposition, on the scalars $\alpha_i,\beta_i$. For instance, the equation ($u,v,w$) yields $\beta_1 = \beta_2 = \beta_3 =0$ and the equation ($u,v,u'$) yields $\beta_1 = \beta_9 =0$, etc.
\end{proof}

%%%%%%%%%%%%%%%%%%%%%%%%%%%%%%%%%%%%%%%

\subsection{Trivial braiding} \label{sec:Skb}

Using the trivial $\text{R}$-matrix $1\otimes 1$ for $kC_2$, we obtain the following result. 
The braided-opposite algebra $S^\opc$, generated by $u', v', w'$, is just the
opposite algebra of $S$, and $S^{\op}$ is generated by $u', v', w'$ subject to relations $r_4$, $r_5$, $r_6$ below. Therefore, 
$$S^\opc ~=~ S^{\text op} ~=~ S(b,a,c).$$

 The relations of $S \ot S^{\op}$ are
\[
\begin{array}{lllll}
r_1:=auv+bvu+cw^2, &r_2:=avw+bwv+cu^2, && r_3:=awu+buw+cv^2,\\
r_4:=bu'v'+av'u'+cw'^2, &r_5:=bv'w'+aw'v'+cu'^2, && r_6:=bw'u'+au'w'+cv'^2,\\
r_7:=uu'-u'u, &r_8:=vu'-u'v, && r_9:=wu'-u'w,\\
r_{10}:=uv'-v'u, &r_{11}:=vv'-v'v, && r_{12}:=wv'-v'w,\\
r_{13}:=uw'-w'u, &r_{14}:=vw'-w'v, && r_{15}:=ww'-w'w.
\end{array}
\]

\begin{proposition} \label{prop:Skb}
Let $S=S(a,b,c)$ be the Sklyanin algebra defined above.
Then $S\ot S^{\op}$ is the algebra generated by $u,v,w,u',v',w'$ subject 
to the relations $r_1,\ldots, r_{15}$ above. 
 The smash product $(S \ot S^{\op}) \# kC_2$ admits a 6-parameter PBW deformation  $\mathcal{D}_{S \ot S^{\op}, \kappa}$  if $a\neq b$,  and a 15-parameter PBW deformation  $\mathcal{D}_{S \ot S^{\op}, \kappa}$  if $a=b$. In particular,
 \begin{enumerate}
\item If $a \neq b$, then $\kappa(r_i) = \alpha_i \in k$ for $i = 1, \dots, 6$, and $\kappa(r_j) = 0$ for $j = 7, \dots, 15$.
\item If $a = b$, then $\kappa(r_i)=\alpha_i\in k$ for $i = 1, \dots, 6$, and $\kappa(r_j) = \beta_j g$ for $j = 7, \dots, 15$ with $\beta_j \in k$.
\end{enumerate}
Moreover, all PBW deformations for which $\kappa =\kappa^C$ (as assumed in Hypothesis~\ref{deg0}) are of this form. 
\end{proposition}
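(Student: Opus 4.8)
The plan is to proceed exactly as in the proof of Proposition~\ref{prop:Ska}, adapting only the features that change when the trivial braiding $\text{R}=1\otimes 1$ is used in place of \eqref{RforC2}. First I would invoke Corollary~\ref{cor:twistkoszul} and Proposition~\ref{lem:HopKoszul} to conclude that $S\ot S^{\op}$ is Koszul, so that Theorem~\ref{thm:mainWW} applies; since we assume $\kappa^L\equiv 0$, only conditions (a) and (c$'$) are relevant. For condition (a), the argument is identical to that of Proposition~\ref{prop:Ska}: because $g\cdot r_i = r_i$ for every $i$ and $Z(kC_2)=kC_2$, Lemma~\ref{lem:prelim} shows that $kC_2$-invariance of $\kappa$ imposes no constraints on the coefficients $\alpha_i,\beta_i$ in the expression $\kappa(r_i)=\alpha_i+\beta_i g$.

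The substance of the proof is condition (c$'$). I would first note that the dimension count is unchanged from Proposition~\ref{prop:Ska}: $\dim W = 6$, $\dim I = 15$, and $\dim(S\ot S^{\op})_3 = \binom{3+5}{5} = 56$, so $\dim\bigl((I\otimes W)\cap(W\otimes I)\bigr) = 20$, since passing from the braided product to the ordinary tensor product $S\ot S^{\op}$ changes only the signs in the ``mixed'' relations $r_7,\ldots,r_{15}$ and not the quadratic structure that controls these dimensions. I would then produce the analogous list of $20$ basis elements of $(I\otimes W)\cap(W\otimes I)$, indexed by the same ordered triples $(u,v,w),(u,v,u'),\ldots,(u',v',w')$; these are obtained from the expressions in Proposition~\ref{prop:Ska} by replacing each occurrence of the anticommuting mixed relations with the corresponding commuting ones, i.e.\ flipping the relevant signs so that, for instance, $r_7 = uu'-u'u$ rather than $uu'+u'u$. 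Applying the equation $\kappa\otimes\id=\id\otimes\kappa$ to each of these $20$ basis elements yields $20$ equations in the unknowns $\alpha_i,\beta_i$, which I would solve.

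The key point, and the main obstacle, is to track carefully how the sign changes in $r_7,\ldots,r_{15}$ alter the resulting linear system, and thereby to see why the answer for the trivial braiding differs from the nontrivial case precisely in the $a=b$ regime: there one finds $\kappa(r_j)=\beta_j g$ (rather than $\alpha_j\in k$) for $j=7,\ldots,15$. The mechanism is that, in each equation of type (c$'$), the grouplike part $g$ and the scalar part $1$ of $\kappa(r_i)$ behave differently when commuted past the generators $u,v,w,u',v',w'$, since $g$ anticommutes with each generator (as $g\cdot u = -u$, etc.) while scalars commute. Because the ordinary tensor product relations $r_7,\ldots,r_{15}$ are sign-flipped relative to the braided ones, the same commutation produces the opposite matching: the scalar parts $\alpha_j$ are forced to vanish for $j\geq 7$ when $a=b$, while the $g$-parts $\beta_j$ are left free, exactly reversing the roles they played in Proposition~\ref{prop:Ska}. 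I would verify that the equations indexed by triples involving only $u,v,w$ or only $u',v',w'$ (such as $(u,v,w)$ and $(u',v',w')$) force $\beta_1=\cdots=\beta_6=0$, leaving $\kappa(r_i)=\alpha_i$ for $i\le 6$, and that the remaining equations collapse, when $a\neq b$, to force $\kappa(r_j)=0$ for $j\ge 7$, but when $a=b$ leave a $9$-dimensional family $\kappa(r_j)=\beta_j g$. Carrying out this casework on $a$ versus $b$, and confirming the $6$- versus $15$-parameter dichotomy, is the only genuinely delicate computation; everything else transfers verbatim from the nontrivial-braiding case.
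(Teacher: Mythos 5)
Your proposal is correct and follows essentially the same route as the paper's own proof: invoke Koszulity so Theorem~\ref{thm:mainWW} applies, note that condition (a) is vacuous by Lemma~\ref{lem:prelim}, and re-derive the 20 equations from the basis of $(I\otimes W)\cap(W\otimes I)$ in Proposition~\ref{prop:Ska} with the signs altered by the commuting mixed relations, then solve the resulting linear system. Your explanation of the mechanism---that $g$ anticommutes with the generators in the smash product while scalars commute, so the sign flip in $r_7,\ldots,r_{15}$ exactly swaps the roles of the $\alpha_j$ and $\beta_j$ for $j\geq 7$, forcing $\kappa(r_j)=\beta_j g$ when $a=b$---is precisely what underlies the paper's computation, which simply lists the altered equations and reads off the conditions.
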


\begin{proof}
As in the proof of Proposition~\ref{prop:Ska}, condition (a) of Theorem~\ref{thm:mainWW} is satisfied for any choices of $\kappa(r_i)\in kC_2$. To apply Theorem~\ref{thm:mainWW}(c$'$), we need to alter the 20 equations derived from the basis of $(I \otimes W) \cap (W \otimes I)$ in the proof of Proposition~\ref{prop:Ska}, as
follows:
\[
\hspace{-.15in}
{\small
\begin{array}{rrl}
\medskip
(u,v,w): &  
\kappa(r_1)w + \kappa(r_2)u + \kappa(r_3)v &
= w\kappa(r_1) + u\kappa(r_2) + v\kappa(r_3) \\
\medskip
(u,v,u'):  &
\kappa(r_1)u' - a \kappa(r_7)v - b \kappa(r_8)u - c \kappa(r_9) w &
= u'\kappa(r_1) + b v\kappa(r_7) + a u \kappa(r_8) + c w \kappa(r_9)\\
\medskip
(u,v,v'):  &
\kappa(r_1)v' - a \kappa(r_{10})v - b \kappa(r_{11})u - c \kappa(r_{12}) w &
= v'\kappa(r_1) + b v\kappa(r_{10}) + a u \kappa(r_{11}) + c w \kappa(r_{12}) \\
\medskip
(u,v,w'): &
\kappa(r_1)w' - a \kappa(r_{13})v - b \kappa(r_{14})u - c \kappa(r_{15}) w &
= w'\kappa(r_1) + b v\kappa(r_{13}) + a u \kappa(r_{14}) + c w \kappa(r_{15})
\end{array}
}
\]
\[
\hspace{-.15in}
{\small
\begin{array}{rrl}
\medskip
(u,w,u'): &
\kappa(r_3)u' - a \kappa(r_9)u - b \kappa(r_7)w - c \kappa(r_8) v &
= u'\kappa(r_3) + aw\kappa(r_7) + bu \kappa(r_9) + c v \kappa(r_8) \\
\medskip
(u,w,v'):  &
\kappa(r_3)v' - a \kappa(r_{12})u - b \kappa(r_{10})w - c \kappa(r_{11}) v &
= v'\kappa(r_3) + aw\kappa(r_{10}) + bu\kappa(r_{12}) + c v \kappa(r_{11}) \\
\medskip
(u,w,w'): &
\kappa(r_3)w' - a \kappa(r_{15})u - b \kappa(r_{13})w - c \kappa(r_{14}) v &
= w'\kappa(r_3) + aw\kappa(r_{13}) + bu \kappa(r_{15}) + c v \kappa(r_{14}) \\
\medskip
(u,u',v'):  &
\kappa(r_4)u + b \kappa(r_7)v' + a \kappa(r_{10})u' + c \kappa(r_{13}) w' &
= u \kappa(r_4) - b u'\kappa(r_{10}) - av' \kappa(r_7) - c w' \kappa(r_{13})
\end{array}
}
\]
\[
\hspace{-.15in}
{\small
\begin{array}{rrl}
\medskip
(u,u',w'):  &
\kappa(r_6)u + b \kappa(r_{13})u' + a \kappa(r_7) w'+ c \kappa(r_{10}) v' &
= u\kappa(r_6) - bw'\kappa(r_7) - au' \kappa(r_{13}) - cv' \kappa(r_{10}) \\
\medskip
(u,v',w'):  & 
\kappa(r_5)u + b \kappa(r_{10})w' + a \kappa(r_{13})v' + c \kappa(r_7) u' &
= u\kappa(r_5) - bv'\kappa(r_{13}) - aw' \kappa(r_{10}) - cu' \kappa(r_7)\\
\medskip
(v,w,u'):  &
\kappa(r_2)u' - a \kappa(r_8)w - b \kappa(r_9)v - c \kappa(r_7) u &
= u'\kappa(r_2) + bw\kappa(r_8) + av \kappa(r_9) + cu \kappa(r_7) \\
\medskip
(v,w,v'):  & 
\kappa(r_2)v' - a \kappa(r_{11})w - b \kappa(r_{12})v - c \kappa(r_{10}) u &
= v'\kappa(r_2) + bw\kappa(r_{11}) + av \kappa(r_{12}) + cu \kappa(r_{10})
\end{array}
}
\]
\[
\hspace{-.15in}
{\small
\begin{array}{rrl}
\medskip
(v,w,w'):  &
\kappa(r_2)w' - a \kappa(r_{14})w - b \kappa(r_{15})v - c \kappa(r_{13}) u &
= w'\kappa(r_2) + bw\kappa(r_{14}) + av \kappa(r_{15}) + cu \kappa(r_{13}) \\
\medskip
(v,u',v'):  & 
\kappa(r_4)v + b \kappa(r_8)v' + a \kappa(r_{11})u' + c \kappa(r_{14}) w' &
= v\kappa(r_4) - bu'\kappa(r_{11}) - av' \kappa(r_8) - cw' \kappa(r_{14})\\
\medskip
(v,u',w'):  &
\kappa(r_6)v+ b \kappa(r_{14})u' + a \kappa(r_8)w' + c \kappa(r_{11}) v' &
= v\kappa(r_6) - bw'\kappa(r_8) - au' \kappa(r_{14}) - cv' \kappa(r_{11}) \\
\medskip
(v,v',w'):  & 
\kappa(r_5)v + b \kappa(r_{11})w' + a \kappa(r_{14})v' + c \kappa(r_8) u' &
= v\kappa(r_5) - bv'\kappa(r_{14}) - aw'\kappa(r_{11}) - cu' \kappa(r_8)
\end{array}
}
\]
\[
\hspace{-.15in}
{\small
\begin{array}{rrl}
\medskip
(w,u',v'):  &
\kappa(r_4)w + b\kappa(r_9)v' + a \kappa(r_{12})u' + c \kappa(r_{15}) w' &
= w\kappa(r_4) - bu'\kappa(r_{12}) - av' \kappa(r_9) - c w' \kappa(r_{15}) \\
\medskip
(w,u',w'): & 
\kappa(r_6)w + b \kappa(r_{15})u' + a \kappa(r_9)w' + c \kappa(r_{12}) v' &
= w\kappa(r_6) - bw'\kappa(r_9) - au' \kappa(r_{15}) - cv' \kappa(r_{12}) \\
\medskip
(w,v',w'): &
\kappa(r_5)w + b\kappa(r_{12})w' + a \kappa(r_{15})v' + c \kappa(r_9) u' &
= w\kappa(r_5) - bv'\kappa(r_{15}) - aw' \kappa(r_{12}) - cu' \kappa(r_9) \\
\medskip
(u',v',w'):  & 
\kappa(r_4)w' + \kappa(r_5)u' + \kappa(r_6)v' &
= w'\kappa(r_4) + u'\kappa(r_5) + v'\kappa(r_6) .
\end{array}
}
\]
These equations are equivalent to the conditions given, in the statement of
the proposition, on the scalars $\alpha_i,\beta_i$. 
\end{proof}

\begin{remark} \label{rem:S(11c)} More PBW deformations of $(S \otb S^\opc) \# kC_2$ were expected when $a=b$, as $S$ in this case satisfies a polynomial identity. Indeed, $S(1,1,c)$ has PI degree 2 (see, e.g. \cite[Proposition~1.6]{Wa}).  Noncommutative PI algebras typically admit more deformations and symmetries (e.g. group/ Hopf actions) than their generic counterparts.
\end{remark}

%%%%%%%%%%%%%%%%%%%%%%%%%
%%%%%%%%%%%%%%%%%%%%%%%%%
%%%%%%%%%%%%%%%%%%%%%%%%%

\section{Further directions} \label{directions}

In this section, we pose questions and suggest directions for future research. Here, $H$ is a Hopf algebra for which there is a  monoidal category $\mathcal{C}$ of  $H$-modules that comes equipped with a braiding ${\sf c}$; $A \otb B$ is the braided product of  two algebras $A$ and $B$ in $\mathcal{C}$; and $A^\opc$ is the braided opposite of $A$ in $\mathcal{C}$. The PBW deformation of the smash product algebra $(A \otb B) \#H$ is denoted by $\mathcal{D}:=\mathcal{D}_{A \otb B, \kappa}$ and depends on deformation parameter $\kappa$.
\smallskip

First, motivated by the Propositions~\ref{prop:kJRC2} and~\ref{prop:kJtrivR} and Propositions~\ref{prop:Ska}~ and~\ref{prop:Skb}, we ask:

\begin{question}
Is the deformation parameter space of $(A \otb B)\# H$ independent of the choice of the braiding~${\sf c}$? When $H$ is cocommutative?
\end{question}

Moreover, in this work we have focused on deformations for which $\kappa =\kappa^C$, yet one could consider:

\begin{problem} \label{kappaL}
Extend this work to classify PBW deformations of  $(A \otb B)\# H$ (or, in particular, of \linebreak $(A \otb A^\opc)\# H$) for which $\kappa^L\not\equiv 0$.
\end{problem}

\noindent To start, consider \cite[Example~4.16]{WW} for instance: there exist non-trivial PBW deformations of $k[u,v] \# T(2)$ in which $\kappa^L\not\equiv 0$ (cf. Proposition~\ref{prop:T(2)}).
\smallskip

The following problem was suggested by Pavel Etingof.

\begin{problem}  \label{prob:iCa}
Define and investigate a $q$-deformed analogue of the {\it infinitesimal Cherednik algebras} defined in work of Etingof, Gan, and Ginzburg \cite{EGG}. 
\end{problem}

 Etingof pointed out that one could start by taking $\mathfrak{g} = \mathfrak{gl}_n$ for $n \geq 2$, as the problem is settled in the case for $n=1$ (the solution modulo a central character is a quantum generalized Weyl algebra \cite{S-AV}).
 This problem also pertains to $q$-deformations of {\it continuous Cherednik algebras} \cite{EGG}. Moreover, by considering work of Losev and Tsymbaliuk \cite{LT}, the task above may yield $q$-deformations of certain W-algebras; in this direction, see Sevostyanov's work on q-W-algebras \cite{Sev}.  
At any rate, we suggest that one could use our framework of PBW deformations of braided products to attack Problem~\ref{prob:iCa} (see Remark~\ref{rk:uqgl2}).
\smallskip

Recall that 
the rational Cherednik algebras (and more generally symplectic reflection algebras) take as a starting point $A=S(V)$, a symmetric algebra, and $B= S(V^*)$, the symmetric algebra on the dual space $V^*$. 
On the other hand, in the context of braided products, we propose the following analogue of a rational Cherednik algebra. We say that an algebra $A \in \mathcal{C}$ is {\it braided commutative} if $A = A^\opc$ as algebras in $\mathcal{C}$.

\begin{definition}(braided rational Cherednik algebra)
An algebra $\mathcal{D}$ is a {\it braided rational Cherednik algebra} if $\mathcal{D}$ is a PBW deformation of $(A \otb A^\opc) \# H$, for some braided commutative algebra $A \in \mathcal{C}$. 
\end{definition}

 We ask:

\begin{question} \label{q:RCA}
What (ring-theoretic, representation-theoretic, homological) properties do braided rational Cherednik algebras share with rational Cherednik algebras?
\end{question}

More generally, one could generalize the terminology above for twisting $H$-module maps $\tau$ (instead of using braidings ${\sf c}$), and pose the same question.

\smallskip

Now prompted by work of Bazlov and Berenstein, there are a myriad of tasks one could attack. For instance,  consider the {\it minimality} of our PBW deformations of braided products (cf. \cite[Theorem~B]{BB1}). 

\begin{definition} Suppose we have presentations $A = T(V)/(I)$ and $B = T(W)/(J)$. We say that the PBW deformation $\mathcal{D}$  is {\it minimal} if $(I)$ and $(J)$ are the largest ideals of $T(V)_{>0}$ and $T(W)_{>0}$, respectively, so that $\mathcal{D}$ admits a triangular decomposition $A \otimes H \otimes B$.
 \end{definition}

\begin{problem}
Show that any PBW deformation $\mathcal{D}$ of $(A \otb B) \# H$ has a unique quotient algebra $\mathcal{D}'$ so that $\mathcal{D}'$ is a minimal PBW deformation of $(A' \otb B') \# H$, for some quotient $H$-module algebras $A', B'$ of $A, B$ in $\mathcal{C}$, respectively.
\end{problem}

\section*{Acknowledgments}
The authors would like to thank the anonymous referee for their useful comments which improved the exposition of this manuscript. We also thank Ulrich Kr\"ahmer for some very helpful correspondence
regarding algebras in braided categories, and we thank Pavel Etingof for providing useful references and for supplying Problem~\ref{prob:iCa}.
Walton was supported by NSF grants \#DMS-1550306 and 1663775.
Witherspoon was supported by NSF grants  \#DMS-1401016 and 1665286.

\end{document}